\newtheorem{theorem}{Theorem}[section]
\newtheorem{lemma}[theorem]{Lemma}
\newtheorem{defn}[theorem]{Definition}
\newtheorem{claim}[theorem]{Claim}
\theoremstyle{definition}
\newcounter{tenumerate}
\def\P{\mathbb{P}}
\newcommand{\one}{\1}
\renewcommand{\epsilon}{\varepsilon}
\newcommand{\1}{\mathbf{1}}
\newcommand{\R}{{\mathbb R}}
\newcommand{\N}{{\mathbb N}}
\newcommand{\E}{{\mathbb E}}
\newcommand{\remove}[1]{}
\renewcommand{\leq}{\leqslant}
\renewcommand{\geq}{\geqslant}
\def\XXint#1#2#3{{\setbox0=\hbox{$#1{#2#3}{\int}$}
\vcenter{\hbox{$#2#3$}}\kern-.5\wd0}}
\begin{document}

\title{{\bf Scaling window for mean-field percolation of averages}}

\author{Jian Ding \footnote{Department of Mathematics,
Stanford University,
Stanford, CA 94305, USA. Email: jianding@math.stanford.edu}}

\maketitle

\begin{abstract}
For a complete graph of size $n$, assign each edge an i.i.d.\
exponential variable with mean $n$. For $\lambda>0$, consider the
length of the longest path whose average weight is at most
$\lambda$. It was shown by Aldous (1998) that the length is of order
$\log n$ for $\lambda < 1/\mathrm{e}$ and of order $n$ for $\lambda
> 1/\mathrm{e}$. Aldous (2003) posed the question on detailed
behavior at and near criticality $1/\mathrm{e}$. In particular,
Aldous asked whether there exist scaling exponents $\mu, \nu$ such
that for $\lambda$ within $1/\mathrm{e}$ of order $n^{-\mu}$, the
length for the longest path of average weight at most $\lambda$ has
order $n^\nu$.

We answer this question by showing that the critical behavior is far
richer: For $\lambda$ around $1/\mathrm{e}$ within a window of
$\alpha(\log n)^{-2}$ with a small absolute constant $\alpha>0$, the
longest path is of order $(\log n)^3$. Furthermore, for $\lambda
\geq 1/\mathrm{e} + \beta (\log n)^{-2}$ with $\beta$ a large
absolute constant, the longest path is at least of length a
polynomial in $n$. An interesting consequence of our result is the
existence of a second transition point in $1/\mathrm{e} + [\alpha
(\log n)^{-2}, \beta (\log n)^{-2}]$. In addition, we demonstrate a
smooth transition from subcritical to critical regime. Our results
were not known before even in a heuristic sense.
\end{abstract}

{\bf Key words and phrases:} Percolation, scaling window, stochastic distance model.

\section{Introduction}
In this work, we study the stochastic mean-field distance model. For
a complete graph $G = (V, E)$ of size $n$, associate each edge $e\in
E$ a non-negative weight $X_e$ which is an independent exponential
variable with mean $n$. For $\lambda > 0$, let $L(n, \lambda)$ be
the length of the longest path whose average weight is at most
$\lambda$.  It was shown by Aldous \cite{Aldous98} that with high
probability (with probability tending to 1 as $n\to \infty$) $L(n,
\lambda) = O(\log n)$ for $\lambda < 1/\mathrm{e}$ and $L(n,
\lambda) = \Theta(n)$ for $\lambda
> 1/\mathrm{e}$. Aldous \cite{Aldous03} posed the question on the detailed
behavior of $L(n, \lambda)$ at and near criticality $1/\mathrm{e}$.
In particular, Aldous asked whether there exist scaling exponents
$\mu, \nu$ such that
\begin{equation}\label{eq-aldous}
n^{-\mu} L(n,\mathrm{e}^{-1} +xn^{- \nu}) \to m(x)\end{equation} in
probability for some deterministic function $m(x)$ satisfying
$$\lim_{x \to \infty} m(x) = \infty, \lim_{x \to
-\infty} m(x) = 0\,.$$

We show in this work that the critical behavior for the stochastic
mean-field model is different from and far richer than that
questioned as in \eqref{eq-aldous}. Our first result determines the
order of $L(n, \lambda)$ at criticality as well as establishes the
right order for the critical window, as incorporated below.
\begin{theorem}\label{thm-critical}
There exist absolute constants $\alpha, C, c>0$ such that for all
$\mathrm{e}^{-1} - (\log n)^{-2} \leq \lambda \leq \mathrm{e}^{-1} +
\alpha (\log n)^{-2}$
\begin{equation*}
\P\left(c(\log n)^3 \leq L(n, \lambda) \leq C (\log n)^3\right) \to
1\,,\end{equation*}
\end{theorem}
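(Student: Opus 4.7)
Let $N_k$ be the number of length-$k$ paths in $K_n$ whose total weight is at most $k\lambda$. A direct first-moment calculation---combining the $\sim n^{k+1}/2$ possible paths with the lower-Gamma estimate $\P(\sum_{i=1}^k X_i \leq k\lambda) \sim (k\lambda/n)^k/k!$ (valid for $k\lambda \ll n$), and applying Stirling---gives
\[
\E N_k \sim \frac{n(\mathrm{e}\lambda)^k}{\sqrt{2\pi k}}.
\]
At the most constrained endpoint $\lambda = \mathrm{e}^{-1}-(\log n)^{-2}$ of the window, taking $k=c(\log n)^3$ for a small absolute constant $c$ yields $\E N_k \asymp n^{1-\mathrm{e}c + o(1)}$, polynomially large in $n$. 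To promote this to $N_k \geq 1$ w.h.p.\ I would compute the second moment, stratifying ordered pairs $(\pi,\pi')$ by the combinatorial type of their shared edges: the joint probability that both path sums fall below $k\lambda$ factors via independent Gamma contributions from the shared and unshared portions. I expect the dominant contribution to come from edge-disjoint pairs, yielding $\E N_k^2 = (1+o(1))(\E N_k)^2$, and Paley--Zygmund then gives $\P(N_k \geq 1) \to 1$. Monotonicity of $L$ in $\lambda$ extends the conclusion to all larger $\lambda$ in the window.

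\paragraph{Upper bound.} By monotonicity it suffices to prove $L(n,\lambda) \leq C(\log n)^3$ at the largest value $\lambda = \mathrm{e}^{-1}+\alpha(\log n)^{-2}$. At this $\lambda$ the first moment is vacuous: $(\mathrm{e}\lambda)^k = \exp(\mathrm{e}\alpha k/(\log n)^2 + o(1))$, so for $k = C(\log n)^3$ one has $\E N_k \sim n^{1 + C\mathrm{e}\alpha + o(1)}/(\log n)^{3/2}$, polynomially large in $n$, and a naive first moment/union-bound argument gives nothing. The reason is structural: valid paths are highly positively correlated through shared low-weight edges, so $N_k$ is either $0$ or enormous. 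My plan is a conditional argument that pins down this dichotomy. Concretely, one shows that, conditional on the existence of one length-$k$ valid path, many additional valid paths of the same length are forced to appear---a ``bush'' of $M \to \infty$ companions generated by local modifications along the low-weight backbone---which yields $\P(L \geq k) \leq \E N_k/M \to 0$. An alternative implementation is to run an exploration from a hypothetical starting vertex, tracking the remaining-budget process $B_t = k\lambda - \sum_{i \leq t} X_{e_i}$, and to couple its evolution to a near-critical branching process to show that $B_t \geq 0$ cannot be maintained past $O((\log n)^3)$ steps, uniformly over starting vertices.

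\paragraph{Main obstacle.} The crux is the upper bound for $\lambda > \mathrm{e}^{-1}$: one must rule out long paths even though the first moment is polynomial in $n$. Extracting the sharp scale $(\log n)^3$ (rather than a weaker polylog) requires the $(\log n)^{-2}$ overshoot of $\lambda$ above $\mathrm{e}^{-1}$ to balance exactly against a $\log n$-deep exploration of the graph, with \emph{absolute} constants; tuning this balance is the technical heart of the proof. It is also why the window has a genuine two-constant structure: the same conditional argument must be shown to break down at a strictly larger scale $\beta(\log n)^{-2}$ with $\beta>\alpha$, which is the ``second transition'' highlighted in the abstract.
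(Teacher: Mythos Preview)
Your proposal misses the central mechanism of the proof in both directions: the \emph{deviation} $M(\gamma)=\max_k\bigl|\sum_{i\le k}X_{e_i}-\tfrac{k}{\ell}X(\gamma)\bigr|$ of a path from its own linear interpolation.

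\textbf{Lower bound.} The naive second moment does \emph{not} concentrate. If $\gamma,\gamma'$ share a set $S$ of $j$ edges, then conditionally on $X(\gamma)\le\lambda\ell$ the shared sum $Y=\sum_{e\in S}X_e$ has a Beta-type law and can be atypically small; integrating $\P(\sum_{e\notin S}X_e\le\lambda\ell-Y)$ against this law produces an extra factor that is exponential in $\ell$. Concretely, for $j=\alpha\ell$ one finds
\[
\frac{\P(X(\gamma')\le\lambda\ell\mid X(\gamma)=\lambda\ell)}{\P\bigl(\Gamma(n,\ell-j)\le\lambda(\ell-j)\bigr)}
\;\asymp\;\exp\bigl(h(\alpha)\,\ell\bigr),\qquad h(\alpha)=2(1-\alpha)\log 2-(2-\alpha)\log(2-\alpha)>0\text{ on }(0,1),
\]
so the contribution from (say) $j=\ell/2$, $i=1$ alone already gives $\E N^2/(\E N)^2\to\infty$ when $\ell\asymp(\log n)^3$. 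The paper fixes this by counting only paths satisfying $M(\gamma)\le\delta\log n$: this truncation forces $Y\ge j\lambda-1-2i\,\delta\log n$ deterministically, where $i=\theta(\gamma,\gamma')$ is the number of shared \emph{segments}. That pins the conditional probability to essentially $n^{-(\ell-j)}n^{O(\delta)i}$, and the counting $|A_{i,j}|\le\ell^{3i}n^{\ell+1-i-j}$ (which loses a factor $n$ per segment, not per edge) then closes the argument. Your stratification ``by combinatorial type'' is the right instinct, but without the deviation truncation there is nothing to stratify against.

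\textbf{Upper bound.} Your diagnosis that the first moment is vacuous is correct, but neither the ``bush'' multiplicity argument nor a branching-process coupling is how the $(\log n)^3$ scale emerges. The paper again uses $M(\gamma)$: any path with $X(\gamma)\le\lambda\ell$ and $M(\gamma)\ge3\log n$ contains a sub-path of weight $\le \mathrm{e}^{-1}\ell^*-\log n$, which a straight first-moment bound rules out; whereas the event $\{X(\gamma)\le\lambda\ell,\,M(\gamma)\le3\log n\}$ carries an extra small-ball cost $\exp\bigl(-c^\star\ell/(3\log n)^2\bigr)$ from the Brownian-bridge lower tail, and for $\ell=C(\log n)^3$ with $\alpha$ small enough this beats the $n\cdot\mathrm{e}^{\mathrm{e}\alpha\ell/(\log n)^2}$ first-moment surplus. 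The $(\log n)^3$ scale is exactly where the bridge small-ball exponent $\ell/(\log n)^2$ reaches order $\log n$; it is not a ``$\log n$-deep exploration'' phenomenon.
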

\noindent{\bf Remark.} In a recent private communication, Aldous
made a guess that $L(n, 1/\mathrm{e}) = n^{o(1)}$, which is
confirmed by the preceding theorem.

Our second result shows a lower bound of polynomial in $n$ on $L(n,
\lambda)$ if $(\lambda - 1/\mathrm{e})/(\log n)^2$ exceeds a large
absolute constant.
\begin{theorem}\label{thm-supcritical}
There exist absolute constants $\beta, C>0$ such that for all
$\lambda \geq \mathrm{e}^{-1} + \beta (\log n)^{-2}$
\begin{equation*}
\P\left(n^{1/4} \leq L(n, \lambda) \leq C n (\lambda -
\mathrm{e}^{-1})\right) \to 1\,,\end{equation*}
\end{theorem}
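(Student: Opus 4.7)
The theorem splits naturally into an upper bound $L(n,\lambda)\leq Cn\delta$ and a lower bound $L(n,\lambda)\geq n^{1/4}$, where I write $\delta:=\lambda-e^{-1}$ throughout. Both bounds will hinge on the fact that the sum of edge weights along a fixed path of length $\ell$ is $n\cdot\mathrm{Gamma}(\ell,1)$, together with the large-deviation tail $\P(\mathrm{Gamma}(\ell,1)\leq \ell\lambda/n)\sim (e\lambda/n)^\ell/\sqrt{2\pi\ell}$ obtained by Stirling.

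\paragraph{Upper bound.}
The first thing to try is a direct first-moment bound: the number of simple paths of length $\ell$ in $K_n$ is $\leq n^{\ell+1}$, so the expected number with average at most $\lambda$ is at most $n(e\lambda)^\ell/\sqrt{\ell} = n(1+e\delta)^\ell/\sqrt{\ell}$. This grows exponentially in $\ell$ for any $\delta>0$, so a naive union bound is too weak. The remedy I would pursue is to reveal edges sequentially along a candidate path and track the running sum $S_k-k\lambda$ as a biased random walk whose step has mean $n-\lambda$: conditioning an $\ell$-path to satisfy $S_\ell\leq \ell\lambda$ tilts the edge distribution so that each edge is, in effect, a truncated exponential whose conditional expectation is below $\lambda$. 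The growth rate of the resulting "tilted exploration" tree from any vertex is then controlled by a Poisson-type offspring distribution of mean $1+O(\delta)$; standard Galton--Watson estimates (or a direct Azuma-type deviation bound on $S_k-k\lambda$) give saturation at length $\ell\asymp 1/\delta$ per unit ``budget overshoot'', which, combined with the constraint $\ell\leq n$, yields the target $\ell\leq Cn\delta$ after a union bound over $n$ starting vertices.

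\paragraph{Lower bound.}
I would bootstrap from Theorem~\ref{thm-critical}. Fix $\lambda_0:=e^{-1}+\alpha(\log n)^{-2}$; Theorem~\ref{thm-critical} supplies, w.h.p., a path $P_0$ of length $\asymp(\log n)^3$ with average at most $\lambda_0$. Since $\lambda-\lambda_0\geq(\beta-\alpha)(\log n)^{-2}$, any extension of $P_0$ by further edges has a total ``budget surplus'' of $(\lambda-\lambda_0)\cdot(\text{length})$ to spend, which should be enough to run an iterated branching construction. Concretely, I would repeatedly attach, at an endpoint of the current path, a fresh ``good'' block of length $\sim(\log n)^3$ produced by applying Theorem~\ref{thm-critical} to the complete graph on the as-yet-unused vertices. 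With $\beta$ large, the expected number of attachable blocks from any endpoint (to a disjoint vertex set) is $1+c$ for some constant $c>0$, giving a supercritical branching process; after $O(\log n)$ generations the path has length well above $n^{1/4}$, long before the pool of unused vertices is exhausted. Independence between generations is maintained by a two-round edge-revealing / sprinkling argument: the first round determines $P_0$ and the block attached at each stage, and the second round supplies the extra randomness needed to guarantee average $\leq\lambda$ overall.

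\paragraph{Main obstacle.}
The hard part is the upper bound. Because the naive first moment is exponentially large, one must either carry out a careful exponential tilt combined with a second-moment or transfer-matrix estimate, or else identify a structural bottleneck (e.g.\ that any length-$\ell$ path satisfying the average constraint is forced to use many vertices of a rare type whose count is controlled). Getting the correct dependence $Cn\delta$ — rather than merely the trivial $n$ — is where the real work lies, and is presumably where the paper's technical core is concentrated.
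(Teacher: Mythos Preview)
Your assessment of where the difficulty lies is exactly backwards.

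\textbf{Upper bound.} You wrote $|\Gamma_\ell|\leq n^{\ell+1}$ and concluded the first moment blows up. But paths are simple, so in fact $|\Gamma_\ell|=\prod_{i=0}^{\ell}(n-i)\leq n^{\ell+1}e^{-\ell^2/(2n)}$. Plugging this in, the expected number of length-$\ell$ paths with average $\leq\lambda$ is at most
\[
O(n/\sqrt{\ell})\,e^{e\delta\ell-\ell^2/(2n)},
\]
and for $\ell\geq C n\delta$ with $C>2e$ the exponent is $\leq -cn\delta^2\leq -c\beta^2 n(\log n)^{-4}$, which kills the polynomial prefactor. Together with Claim~\ref{claim-path-monotone} this is the whole upper bound; the paper does it in four lines. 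There is no need for exponential tilting, Galton--Watson analysis, or structural bottlenecks.

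\textbf{Lower bound.} This is the technical core, and your sketch has a real gap. Concatenating blocks of length $(\log n)^3$ for $O(\log n)$ generations gives a path of length $(\log n)^4$, not $n^{1/4}$; to reach $n^{1/4}$ you need $n^{1/4}(\log n)^{-3}$ iterations, and a constant-probability success at each step (which is all Theorem~\ref{thm-critical} provides, even if sharpened to start at a prescribed vertex) is far too weak to survive that many rounds. Framing it as a supercritical branching process does not help: you are building a single path, so what matters is the survival of one ray, and you have no mechanism to make the per-step failure probability $o(n^{-1/4})$. Sprinkling cannot rescue this either, since the total budget $\delta$ is fixed and cannot be split into $n^{1/4}(\log n)^{-3}$ independent rounds.

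The paper instead runs a direct second-moment argument on $N=\sum_{\gamma\in\Gamma_\ell}\mathbf{1}_{G_\gamma}$ with $\ell=n^{1/4}$ and $G_\gamma=\{X(\gamma)\approx\lambda\ell,\ M(\gamma)\leq\zeta\log n\}$. The point is that for $\ell$ this large, the truncation probability $\P(M(\gamma)\leq\zeta\log n\mid X(\gamma))\approx e^{-C^\star\ell/(\zeta\log n)^2}$ is exponentially small in $\ell$ and must be tracked through the second-moment computation; this is what forces the delicate conditional-deviation estimates of Lemmas~\ref{lem-p-s} and~\ref{lem-deviation-conditioning}, which bound $\P(M(\gamma')\leq\zeta\log n\mid G_\gamma)$ when $\gamma'$ shares $i$ segments and $j$ edges with $\gamma$. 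Combined with the counting Lemma~\ref{lem-counting}, one gets $\E N^2=(1+o(1))(\E N)^2$ provided $\beta$ is large relative to $C^\star/\zeta^2$.
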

\noindent{\bf Remark.} It seems more careful analysis can improve
the lower bound to $n^{1/2+o(1)}$. We chose not to do so because we
believe it is still far away from being tight and thus the
improvement is only technical.

 Interestingly,
Theorems~\ref{thm-critical} and \ref{thm-supcritical} imply that
there is yet another phase transition occurring somewhere in
$1/\mathrm{e} + [\alpha (\log n)^{-2}, \beta (\log n)^{-2}]$. In
addition, we demonstrate a smooth transition from subcritical to
critical regime.
\begin{theorem}\label{thm-subcritical}
There exist absolute constants $C, c>0$ such that for all $\lambda
\leq \mathrm{e}^{-1} - (\log n)^{-2}$,
\begin{equation*}
\P\left(c (\mathrm{e}^{-1} - \lambda)^{-1} \log n \leq L(n, \lambda)
\leq C (\mathrm{e}^{-1} - \lambda)^{-1} \log n\right) \to
1\,.\end{equation*}
\end{theorem}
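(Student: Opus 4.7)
The plan is to prove matching bounds $c(\log n)/\delta \le L(n,\lambda) \le C(\log n)/\delta$ w.h.p., where $\delta = \mathrm{e}^{-1} - \lambda \in [(\log n)^{-2}, \mathrm{e}^{-1})$, via a first-moment argument for the upper bound and a second-moment argument (boosted to high probability) for the lower bound.

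\emph{Upper bound.} For any fixed sequence of $k+1$ vertices, the total weight along the corresponding path is $\mathrm{Gamma}(k, 1/n)$-distributed, so the elementary bound $\P(\mathrm{Gamma}(k,1)\le t) \le t^k/k!$ (using $\mathrm{e}^{-x} \le 1$) combined with Stirling's formula gives
\[
\P(\text{average weight} \le \lambda) \;\le\; \frac{(k\lambda/n)^k}{k!} \;\le\; \frac{(\mathrm{e}\lambda)^k}{n^k\sqrt{2\pi k}}.
\]
Multiplying by $n^{k+1}$, an upper bound on the number of length-$k$ simple paths in the complete graph, the expected count of such paths is at most $n(\mathrm{e}\lambda)^k/\sqrt{k}$. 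Since $\mathrm{e}\lambda = 1-\mathrm{e}\delta \le \mathrm{e}^{-\mathrm{e}\delta}$, summing over $k \ge K_+ := C(\log n)/\delta$ gives a geometric-type series bounded by $\lesssim n^{1-\mathrm{e}C}/(\delta\sqrt{K_+})$, which is $o(1)$ for $C > 1/\mathrm{e}$. Markov's inequality then yields $L(n,\lambda) \le K_+$ w.h.p.

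\emph{Lower bound via second moment.} Let $K_- := c(\log n)/\delta$ for small $c < 1/\mathrm{e}$, and let $X$ denote the number of length-$K_-$ paths with average weight at most $\lambda$. The same calculation gives $\E[X] \gtrsim n^{1-\mathrm{e}c}/\sqrt{K_-}$, a positive power of $n$. For $\E[X^2]$, I would partition pairs of length-$K_-$ paths by the number $j \in \{0,1,\ldots,K_-\}$ of shared edges; conditionally on the shared-weight sum $S_j \sim \mathrm{Gamma}(j,1/n)$, the joint-tail event reduces to the beta-integral identity
\[
\int_0^{K_-\lambda} s^{j-1}(K_-\lambda - s)^{2(K_- - j)}\,ds \;=\; (K_-\lambda)^{2K_- - j}\,\mathrm{B}(j,\, 2K_- - 2j+1),
\]
evaluated against the two independent $\mathrm{Gamma}(K_- - j,1/n)$ factors coming from the unshared edges. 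Combined with a combinatorial estimate of the number of path pairs sharing a subpath of $j$ edges, this yields $\E[X^2] \le C'\,\E[X]^2$ for an absolute constant $C'$, and Paley--Zygmund gives $\P(X > 0) \ge 1/C' > 0$.

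\emph{Main obstacle: boosting.} The nontrivial step is lifting the constant-probability bound to $1-o(1)$. Partitioning $V$ into $t$ blocks of size $n/t$ does not directly work: the distribution $\mathrm{Exp}(n)$ does not rescale under restriction, so the effective threshold in a size-$n/t$ subgraph is $\lambda/t$, pushing the sub-problem out of the subcritical regime. Direct McDiarmid-type concentration is also too weak, since a single small-weight edge can influence $X$ through many paths. The resolution I would pursue is a two-stage sprinkling: first expose only edges of weight below a well-chosen threshold $T = T(\delta)$, producing a sparse Erd\H{o}s--R\'enyi-type subgraph; locally approximate by the Poisson-weighted infinite tree and invoke a supercritical branching-process analysis to produce many candidate short paths with the correct weight profile; then sprinkle in the heavier edges to stitch these into a length-$K_-$ path while preserving the average-weight budget. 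The delicate part will be ensuring the budget is not exceeded during the sprinkling phase and quantifying the approximate independence across different candidate start vertices.
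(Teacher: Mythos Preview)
Your upper bound is correct and matches the paper's. The genuine gap is in the lower bound: your claim that the untruncated second moment satisfies $\E[X^2]\le C'(\E[X])^2$ with an \emph{absolute} constant $C'$ is false near the edge of the window, i.e.\ when $\delta\approx(\log n)^{-2}$ and hence $K_-\asymp(\log n)^3$. Your beta integral gives, for a pair sharing $j$ edges,
\[
\frac{\P(\text{joint})}{\P(\text{single})^2}\;\approx\;\Big(\frac{n}{\lambda K_-}\Big)^{j}\,\frac{(K_-!)^2\,(2K_--2j)!}{((K_--j)!)^2\,(2K_--j)!}\,.
\]
Combining this with the count $\asymp (K_--j)^2 n^{K_--j}$ for pairs sharing a single segment of $j$ edges, the contribution to $\E[X^2]/(\E[X])^2$ at $j=\alpha K_-$ is (up to polynomial factors) $n^{-1}\exp\big(K_-\,g(\alpha)\big)$ with $g(\alpha)=2(1-\alpha)\log 2-(2-\alpha)\log(2-\alpha)$. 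Since $g>0$ on $(0,1)$ (maximum $\approx 0.085$ near $\alpha\approx 0.53$), this term is $\exp(\Theta(K_-))$, which for $K_-\asymp(\log n)^3$ overwhelms any power of $n$. So Paley--Zygmund gives nothing, and the ``main obstacle'' you isolate (boosting) is not the real one.

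The paper's device is exactly what kills this blowup and simultaneously removes the need for any boosting: one counts only paths satisfying the extra constraint $M(\gamma):=\max_k\big|\sum_{i\le k}X_{e_i}-\tfrac{k}{\ell}X(\gamma)\big|\le\tfrac{1}{10}\log n$. On this truncated event, if $\gamma,\gamma'$ share $j$ edges in $i$ segments, the shared weight is forced to lie within $2i\cdot\tfrac{1}{10}\log n$ of $\lambda j$, so the conditional probability for $\gamma'$ gains only a factor $n^{O(i)}$ with a small constant. Pairs are then classified by \emph{both} $i$ and $j$, and the counting bound $|A_{i,j}|\le \ell^{3i}n^{\ell+1-i-j}$ supplies a factor $n^{-i}$ that beats the correlation factor. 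Summing over $1\le i\le j\le\ell$ yields $\E[N^2]=(1+o(1))(\E N)^2$, and Chebyshev gives $\P(N>0)\to1$ directly---no sprinkling, no branching-process approximation.
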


\smallskip

\noindent{\bf Related work.} While our work focuses on the second
order behavior (or finite-size scaling in the language of
statistical physics), the first-order behavior was studied by Aldous
\cite{Aldous05}.  It is believed that $L(n, \lambda)/n \to
\delta(\lambda)$ in probability as $n\to \infty$ for some function
$\delta(\lambda)$. Indeed, we see that $\delta(\lambda) = 0$ for
$\lambda < 1/\mathrm{e}$. In \cite{Aldous05}, a non-rigorous
derivation of $\delta(\lambda)$ using a reformulation of the cavity
method gives that $\delta(\lambda) \asymp (\lambda -
1/\mathrm{e})^3$.

In addition, the quantity $L(n, \lambda)$ studied in this paper is a
natural variant of several other objects that were studied before.
If we consider the path of small maximal weight other than average
weight, this is an extensively studied question of the longest path
in Erd\H{os}-R\'enyi random graphs. For $G \sim G(n, c/n)$ (a random
graph obtained by preserving each edge in complete graph with
probability $c/n$ independently), Ajtai, Koml{\'o}s, and
Szemer{\'e}di \cite{AKS81} proved that there is a path of length
$\alpha(c) n$ where $\alpha(c) > 0$ for $c>1$ and $\alpha(c) \to 1$
as $c\to \infty$; a similar and slightly weaker result was shown
independently by Fernandez de la Vega \cite{Fernandez79}. Later, the
attention was shifted to the asymptotics of $1 - \alpha(c)$.
Improving a previous work of Bollob{\'a}s \cite{Bollobas82}, Frieze
\cite{Frieze86} obtained a sharp estimate on the asymptotics of $1 -
\alpha(c)$ as $c\to \infty$. In addition, it is not hard to see that
for $c<1$ the longest path is of order $\log n$, and for $c = 1$ it
can be deduced from a result of Nachmias and Peres \cite{NP08} that
the longest path is of order $n^{1/3}$.

Here we fix a maximum $\lambda$ for the average weight and try to
maximize the length for the longest path that satisfies this
constraint. If we reverse the optimization (i.e., we insist on a
path that visits every vertex and minimize the average weight), it
becomes the classic traveling salesman problem in the mean-field
setting. For this question, W\"{a}stlund \cite{Wastlund10}
established the sharp asymptotics for more general distributions on
the edge weight, confirming the Krauth-M\'{e}zard-Parisi conjecture
\cite{MP86b, MP86, KM89}.

Finally, the maximal size $T(n, \lambda)$ of the subtree whose
average weight is at most $\lambda$ was studied in \cite{Aldous98}.
It was shown that $T(n, \lambda)$ transitions from $o(n)$ to
$\Theta(n)$ at some critical point $\lambda_0$ whose value can be
specified in terms of a fixed point of a mapping on probability
distributions.

\noindent{\bf Remark.} While our work was in review, Mathieu and Wilson posted an article \cite{MW12} studying the minimal mean-weight cycles in the same setting, where they demonstrated a transition at $1/\mathrm{e}$.
\smallskip

\noindent{\bf Main ideas of the proofs.}  We view the problem from a
slightly different perspective. We first fix a length $\ell$ and
compute the minimal average weight of all paths of length $\ell$;
then we vary $\ell$ to match this minimal average weight with
$\lambda$. A simple and useful fact is that the minimal average
weight is increasing with $\ell$ at least in a coarse sense (c.f.
Claim~\ref{claim-path-monotone}).

With the aforementioned perspective in mind, our proof ideas can be
traced back to Bramson's celebrated work \cite{Bramson78}, which
gives a very precise evaluation on the minimal displacement of the
branching Brownian motion. The main obstacle is that, we do not have
a real tree structure in the mean-field setting (one could argue
that it is locally tree-like, but certainly not globally), which is
a crucial component in Bramson's argument. The lacking of a tree
structure poses the challenges on how to control the correlation
between different paths (say, of the same length) and how to select
the truncation function for the second moment calculation, where the
two issues are intrinsically related to each other. In what follows,
we discuss the solution to these challenges focusing on the case of
$\lambda = 1/\mathrm{e}$.

The solution arises from the following observations. Note that there
are two opposite forces on the (maximal) deviation of the partial
sums from the expectation for a typical path with small average
weight. First, the deviation cannot be too large since otherwise
there would exist a path whose average weight is too small, but that
is unlikely due to a first moment calculation (c.f.,
Lemma~\ref{lem-atypical}). Second, the deviation cannot be too small
for long paths since conditioning on the average weight of a path,
the partial sums behave like a Brownian bridge and thus typically
exhibits a deviation of order $\sqrt{\mbox{path length}}$ (c.f.,
Lemma~\ref{lem-deviation}). These two forces together, leaves no
other possibility but that the path of small average weight is
short.

Crucially, the aforementioned deviation of the path serves well as
the truncation function (for the proof of the lower bound). Observe
that a bad event which produces large probability for the average
weights of both two paths to be small, is that the weights on the
common edges for these two paths are unusually small. However, once
restricted to paths of small deviation, the total weight of the
common edges cannot differ much from the expectation (given the
average weight of the path) and it is indeed bounded by the maximal
deviation multiplied with the number of segments induced by these
common edges (c.f., Definition \ref{def-theta}). Another important
ingredient is that the number of pairs (of the paths) decreases
rapidly with the number of segments of the common edges (c.f.,
Lemma~\ref{lem-counting}). Altogether, this allows us to control the
correlation globally, and thus provides a way to prove the lower
bound.

\smallskip

\noindent{\bf Discussions and further questions.} Our work suggests
a number of open questions. Naturally, one could ask what is the
location and behavior for the second phase transition. The main
obstacle for identifying the transition location seems to be that we
have to select different truncation functions for the upper and
lower bounds in the proof. More importantly, the probability costs
for these two truncations are hugely different. The argument of
Bramson also adopts different truncations (the so-called upper
 and lower envelops), but the probability costs for these two
turn out to be of the same order in that case.

It would also be interesting to determine the right order of $L(n,
\lambda)$ in the regime of Theorem~\ref{thm-supcritical}. The lower
bound we obtained there seems to be far away from being tight. The
main limitation of our arguments is that, we rely heavily on the
fact that the number of pairs (of paths) decreases rapidly with the
number of segments for the common edges assuming a fixed number of
common edges. This stops being true once the path under
consideration gets too long.

An alternative direction is on the refined estimate at criticality.
In particular, does \begin{equation}L(n, \mathrm{e}^{-1})/(\log n)^3
\to \xi\end{equation} in probability for some $\xi>0$? If so, what
is the limit and what is the variance of $L(n, 1/\mathrm{e})$?

\smallskip

\noindent {\bf A word on notation.} Throughout the paper, we denote
by $C, c>0$ absolute constants whose value could vary from line to
line. Other absolute constants like $\alpha, \beta, C^\star,
c^\star$ are fixed once for all. As we have different regimes to
consider, we usually fix the value of the parameter $\lambda$ and
possibly other parameters in each section/subsection, and all of
these settings for values will appear at the very beginning at each
section/subsection.

\section{Critical behavior within scaling window}
\label{sec:critical}

In this section, we study the critical behavior within scaling
window and prove Theorem~\ref{thm-critical}.

\subsection{Deviation of typical light path}

For a path $\gamma = v_0, e_1, v_1, \ldots, e_\ell, v_\ell$ where
$v_{i-1}$ and $v_i$ are endpoints of $e_i$ for all $i\in [\ell]$ (of
course a sequence of edges would already uniquely specify a path,
but we purposely choose to emphasize both vertices and edges for a
path in this work), let
\begin{equation}\label{eq-def-total-weight}
X(\gamma) = \mbox{$\sum_{i=1}^\ell$} X_{e_i}
\end{equation} be
the (total) weight of $\gamma$. Clearly, $X(\gamma)$ follows Gamma
distribution, which is of central importance throughout the work.
Let $Z\sim \Gamma(\theta, k)$ be a Gamma variable with parameter
$(\theta, k)$, that is to say, $Z$ has the same law as a sum of $k$
i.i.d.\ exponential variables with mean $\theta$. We will repeatedly
use the density function $f_{\theta, k}(z)$ of $Z$, where
\begin{equation}\label{eq-gamma-distribution}
f_{\theta, k}(z) = z^{k-1} \frac{\mathrm{e}^{-z/\theta}}{\theta^k
(k-1)!} \mbox{ for all } z\geq 0, \theta>0, k\in \N\,.
\end{equation}
We first show that the average weight of a path cannot be
significantly smaller than $1/\mathrm{e}$. For convenience of
notation, denote by $\Gamma_\ell$ the collection of all paths of
length $\ell$, for any $\ell \in [n]$.
\begin{lemma}\label{lem-atypical}
Let $E_n$ be the event that there is a path of length $\ell$ with
weight at most $\ell/\mathrm{e} - \log n$ by
\begin{equation}\label{eq-def-E}
E_n = \cup_{\ell=1}^n\cup_{\gamma\in \Gamma_\ell}\{ X(\gamma) \leq
\mathrm{e}^{-1} \ell - \log n\}\,.\end{equation} Then $\P(E_n)\to 0$,
as $n \to \infty$.
\end{lemma}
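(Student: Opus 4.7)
The plan is a direct first-moment/union-bound argument, which is made clean by the fact that a sum of $\ell$ exponentials with mean $n$ has a density with a small-$z$ behavior that costs us only a polynomial factor per unit of $\ell$.

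First, I would fix $\ell$ and a single path $\gamma\in\Gamma_\ell$. Since $X(\gamma)\sim \Gamma(n,\ell)$, bounding the exponential factor by $1$ in \eqref{eq-gamma-distribution} gives
\begin{equation*}
\P(X(\gamma)\le t) \;\le\; \int_0^t \frac{z^{\ell-1}}{n^\ell (\ell-1)!}\,dz \;=\; \frac{t^\ell}{n^\ell\, \ell!}.
\end{equation*}
Applied with $t=\mathrm{e}^{-1}\ell-\log n$ (which is nonnegative only when $\ell\ge \mathrm{e}\log n$; for smaller $\ell$ the event is empty and contributes nothing), Stirling's inequality $\ell!\ge (\ell/\mathrm{e})^\ell$ yields
\begin{equation*}
\P(X(\gamma)\le \mathrm{e}^{-1}\ell-\log n)\;\le\; \frac{1}{n^\ell}\Bigl(1-\tfrac{\mathrm{e}\log n}{\ell}\Bigr)^\ell \;\le\; \frac{1}{n^\ell}\, \mathrm{e}^{-\mathrm{e}\log n} \;=\; \frac{n^{-\mathrm{e}}}{n^\ell}.
\end{equation*}

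Next I would union bound over the $|\Gamma_\ell|\le n^{\ell+1}$ paths of length $\ell$, which exactly cancels the $n^{-\ell}$ and leaves $n^{1-\mathrm{e}}$. Summing over $\ell\in[n]$ gives $\P(E)\le n\cdot n^{1-\mathrm{e}} = n^{2-\mathrm{e}}$, and since $\mathrm{e}>2$ this tends to $0$.

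There is no real obstacle here: the whole argument rests on the identity $\mathrm{e}^{\mathrm{e}}>\mathrm{e}^2$, that is, on the ``$1/\mathrm{e}$'' in the threshold being tuned precisely to beat the path-counting factor $n^{\ell+1}$ by the density-at-zero factor $(\mathrm{e}/\ell)^\ell$ after Stirling. The only thing one must watch is the short-path regime $\ell<\mathrm{e}\log n$, where the event is vacuous and should be discarded before invoking the Stirling bound. The $\log n$ deficit in the threshold is exactly what buys the extra $n^{-\mathrm{e}}$ needed to absorb the union bound over $\ell$.
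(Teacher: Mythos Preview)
Your proof is correct and follows essentially the same route as the paper's own argument: a first-moment (union) bound using the Gamma tail combined with Stirling to extract the factor $n^{-\mathrm{e}}$, then summing over $\ell$. The only cosmetic differences are that you integrate the density to get the clean bound $t^\ell/(n^\ell\ell!)$ whereas the paper bounds the density at the endpoint, and you are slightly more explicit about discarding the vacuous range $\ell<\mathrm{e}\log n$.
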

\begin{proof}
For any $\ell\in [n]$, we have $|\Gamma_\ell| \leq n^{\ell+1}$. In
addition, by \eqref{eq-gamma-distribution}, the probability for each
$\gamma\in \Gamma_\ell$ has total weight less than $\mathrm{e}^{-1}
\ell - \log n$ is bounded by
$$\P\left(X(\gamma) \leq \mathrm{e}^{-1} \ell - \log n\right) \leq 10 \cdot (\mathrm{e}^{-1} \ell - \log n)^{\ell - 1} \frac{\mathrm{e}^{-\ell/n}}{n^\ell (\ell-1)!} = O(n^{-(\ell + \mathrm{e})})\,,$$
where the last equality follows from Stirling's formula. An
application of a union bound over $\Gamma_\ell$ and then over
$\ell\in [n]$ yields the lemma.
\end{proof}
Define $M(\gamma)$ to be the deviation of $\gamma$ away from the
linear interpolation between the starting and ending edges, by
\begin{equation}\label{eq-def-deviation}
 M(\gamma) =
\mbox{$\sup_{1\leq k\leq \ell}$} |\mbox{$\sum_{i=1}^k$} X_{e_i} -
\tfrac{k}{\ell} X(\gamma)|\,.\end{equation} By Donsker's theorem, it
is not hard to say that the deviation process $\{\sum_{i=1}^k X_{e_i} -
\tfrac{k}{\ell} X(\gamma)\}$ conditioned on the value of $X(\gamma)$
converges to a Brownian bridge after suitable normalization. The
deviation of a Brownian bridge, i.e., the maximum of the absolute
values, is known to have Kolmogorov distribution, where the law can
be written down explicitly as a sum of series. In particular, its
left tail area has been obtained in \cite{PG76} as follows:
\begin{equation}\label{eq-lower-tail}\P\left(\mbox{$\max_{0\leq t\leq 1}$}|B_t| \leq \delta\right) =
\frac{\sqrt{2 \pi}+o_\delta(1)}{\delta}
\mathrm{e}^{-\frac{\pi^2}{8\delta^2}}\,,\end{equation} where
$(B_t)_{0\leq t\leq 1}$ is a standard Brownian bridge, and $0\leq
o_\delta(1) \downarrow 0$ as $\delta\to 0$. The analog to deviation
of Brownian bridge gives convincing evidence for the type of decay
for the lower tail of $M(\gamma)$. However, as we are trying to
analyze the tiny probability for a rare event, the desired estimate
could not follow directly by convergence in law. We give a proof in
what follows, without aiming at optimizing the exponents for the
decay. We start with the next simple claim.

\begin{claim}\label{claim-exponential}
For i.i.d.\  exponential variables $Z_i$ with mean $\theta > 0$ and
$m\leq n/2$, let $Z = \sum_{i=1}^n Z_i$ and $Z' = \sum_{i=1}^m Z_i$.
Let $g(\cdot)$ be the density function of $Z'$ conditioned on $Z =
\theta n$. Then for all $1 \leq |z - \theta m| \leq 10 \sqrt{\theta
m}$,
$$\tfrac{1}{10^6 \sqrt{\theta m}}\leq g(z) \leq \tfrac{2}{\sqrt{\theta m}}\,.$$
\end{claim}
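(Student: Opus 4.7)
The plan is to write $g(z)$ in closed form and then estimate it via Stirling's formula and a Taylor expansion about the conditional mean. Since $Z' \sim \Gamma(\theta, m)$ and $Z - Z' \sim \Gamma(\theta, n - m)$ are independent, the conditional density is
\[
g(z) = \frac{f_{\theta,m}(z)\,f_{\theta,n-m}(\theta n - z)}{f_{\theta,n}(\theta n)}.
\]
Plugging in \eqref{eq-gamma-distribution}, the exponential factors $e^{-z/\theta}\cdot e^{-(\theta n - z)/\theta}$ cancel against the $e^{-n}$ from the denominator, leaving the Beta-like expression
\[
g(z) = \frac{(n-1)!}{(m-1)!\,(n-m-1)!} \cdot \frac{z^{m-1}(\theta n - z)^{n-m-1}}{(\theta n)^{n-1}}.
\]

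Next I would evaluate $g$ at the conditional mean $z = \theta m$ using Stirling's formula. The combinatorial prefactor equals $\Theta(1)\sqrt{m(n-m)/n}\cdot n^n/(m^m(n-m)^{n-m})$, while the power factor at $z = \theta m$ simplifies to $\theta^{-1}\,m^{m-1}(n-m)^{n-m-1}/n^{n-1}$. Multiplying, and using $m \leq n/2$ so that $n/(n-m) \in [1,2]$, yields $g(\theta m) = \Theta(1/(\theta\sqrt{m}))$. Edge cases where $m$ or $n-m$ is very small can be handled by direct computation, and the generous constants $1/10^6$ and $2$ in the claim absorb any order-one discrepancy.

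To extend the estimate to the whole window I would Taylor expand. Setting $z = \theta m + t$, a direct computation gives
\[
\log \frac{g(z)}{g(\theta m)} = (m-1)\log\!\left(1 + \tfrac{t}{\theta m}\right) + (n-m-1)\log\!\left(1 - \tfrac{t}{\theta(n-m)}\right).
\]
The second-order term yields an exponent of order $-t^2/(\theta^2 m)$, which is bounded by an absolute constant throughout the stated window. The main obstacle is the linear term: it would cancel exactly were $m-1, n-m-1$ replaced by $m, n-m$, because $\theta m$ is only an approximate mode of $g$ (the true mode differs by $O(\theta)$). One needs to verify that the residual linear contribution, of order $t/(\theta m)$, together with the cubic Taylor error, remains bounded across the window; once this is shown, $g(z)/g(\theta m)$ lies between two absolute constants and the claim follows with substantial slack.
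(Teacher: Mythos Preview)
Your approach is correct and essentially identical to the paper's: both write the conditional density via Bayes' formula, cancel the exponentials to obtain the Beta-type expression, and then invoke Stirling's formula together with a direct estimate over the window $|z-\theta m|\le 10\sqrt{\theta m}$. The paper compresses everything after the Bayes step into the single phrase ``a direct computation with an application of Stirling's formula,'' whereas you spell out the evaluation at $z=\theta m$ and the Taylor expansion; note that both arguments implicitly use that $\theta$ is of order $1$ (so that the quadratic exponent $t^2/(\theta^2 m)$ stays bounded on the stated window), which holds in every application of the claim in the paper.
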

\begin{proof}
Let $f_{k, \theta}(\cdot)$ be density function of Gamma distribution
as in \eqref{eq-gamma-distribution}. By Bayesian formula, we obtain
that
\begin{align*}
g(z) = \frac{f_{m, \theta} (z) f_{n-m, \theta}(\theta n - z)}{f_{n,
\theta} (\theta n)} = \frac{z^{m-1} \mathrm{e}^{-z/\theta}
}{\theta^m (m-1)!}\frac{(\theta n - z)^{n-m-1} \mathrm{e}^{-(\theta
n -z)/\theta}}{\theta^{n-m} (n-m-1)!} \frac{\theta^n (n-1)!}{(\theta
n)^{n-1}\mathrm{e}^{-n}}\,.
\end{align*}
Now the claim follows from a direct computation with an application
of Stirling's formula.
\end{proof}

\begin{lemma}\label{lem-deviation}
Let $Z_i$ be i.i.d.\ exponential variables with mean $\theta>0$ for
$1\leq i\leq n$. For $1/4\leq \rho \leq 4$, consider the variable
\begin{equation}\label{eq-def-M}
M = M_n = \mbox{$\sup_{1\leq k\leq n}$}|\mbox{$\sum_{i=1}^k$} Z_i -
\rho k|.\end{equation} Then, there exist absolute constants $c^\star,
C^\star>0$ such that for all $r\geq 1$ and $n\geq r^2$,
$$\mathrm{e}^{-C^\star n/r^2}\leq \P(M\leq r \mid \mbox{$\sum_{i=1}^n$} Z_i = \rho n) \leq \mathrm{e}^{-c^\star n/r^2} \,.$$
\end{lemma}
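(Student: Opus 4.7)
The plan is to decompose $[n]$ into $L := \lfloor n/r^2\rfloor$ blocks of length approximately $r^2$ and analyze the partial sums at block endpoints. Writing $T_j := S_{jr^2}$ and $D_j := T_j - \rho j r^2$, the global conditioning $\sum Z_i = \rho n$ amounts to $D_L = 0$, and using \eqref{eq-gamma-distribution} together with the Markov property, the joint conditional density of $(D_1,\ldots,D_{L-1})$ given $D_L=0$ is proportional (on the positive-increment region) to $\prod_{j=1}^{L}(\rho r^2 + D_j - D_{j-1})^{r^2-1}$. First I would Taylor-expand the logarithm of this density in the regime $|D_j - D_{j-1}| \le O(r)$: the linear terms telescope, and the quadratic terms reveal that the restricted conditional law is within bounded multiplicative factors of a Gaussian random-walk bridge of $L$ steps with step variance of order $\rho^2 r^2$.

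For the upper bound, observe that $\{M\le r\} \subset \{|D_j|\le r \text{ for all } j\}$, and the latter event forces $|D_j - D_{j-1}| \le 2r$, so the Gaussian approximation is in force on the event of interest. A transfer-operator computation---the finite-dimensional analog of the Dirichlet Laplacian calculation producing \eqref{eq-lower-tail}---then gives that a Gaussian bridge of $L$ steps with step variance $\Theta(r^2)$ is confined to $[-r,r]$ with probability at most $\mathrm{e}^{-c^\star L} = \mathrm{e}^{-c^\star n/r^2}$, yielding the stated upper bound.

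For the lower bound I would exhibit an event $\mathcal{G}$ of probability $\ge \mathrm{e}^{-C^\star n/r^2}$ on which $M\le r$. Define $\mathcal{G}$ by: (i) $|D_j|\le r/4$ at every block endpoint, and (ii) within each block, the deviation from the straight-line interpolation between $T_{j-1}$ and $T_j$ is at most $r/2$; on $\mathcal{G}$ we have $M \le 3r/4\le r$. Part (i) has probability $\ge \mathrm{e}^{-C^\star L}$ from the matching lower bound in the same Gaussian-bridge estimate applied to the tube $[-r/4, r/4]$. For part (ii), conditionally on the endpoints the blocks are independent Gamma bridges of length $r^2$, and within one such block Claim~\ref{claim-exponential} yields uniform density bounds that produce an absolute constant $c_0 > 0$ for the intra-block maximum being at most $r/2$; multiplying over the $L$ blocks gives $c_0^L = \mathrm{e}^{-\Theta(n/r^2)}$.

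The main obstacle will be making the Taylor-expansion error rigorous in both directions while correctly handling the positive-increment boundary, and matching the multiplicative constants in the Gaussian-bridge estimate so the exponents scale precisely as $n/r^2$ rather than picking up spurious factors in $L$. The upper and lower bounds should be executed on consistent truncation windows (the slabs of width $2r$ and $r/2$) so that the extracted exponential rates become genuine absolute constants $c^\star$ and $C^\star$.
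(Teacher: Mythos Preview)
Your block decomposition is the same skeleton the paper uses, but the key technical step differs. You propose to Taylor-expand the log-density $\sum_j (r^2-1)\log(\rho r^2 + D_j - D_{j-1})$ and reduce to a Gaussian random-walk bridge, then invoke a transfer-operator/Dirichlet-Laplacian computation. The paper bypasses the Gaussian approximation entirely: for the upper bound it simply observes that, conditionally on $Q_1,\ldots,Q_j$, Claim~\ref{claim-exponential} gives $\P(|D_{j+1}-D_j|\le 2r)\le 1-10^{-7}$ directly from the Gamma density, so $\P(\cap_j Q_j)\le (1-10^{-7})^{L}$. No expansion, no spectral estimate. For the lower bound the paper likewise uses Claim~\ref{claim-exponential} to get $\P(R_j\mid R_1,\ldots,R_{j-1})\ge 10^{-8}$ for the endpoint part, and then Donsker convergence to a Brownian bridge (not Claim~\ref{claim-exponential}) for the within-block maximum.

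Your route is workable but carries real overhead you already flag: the cubic Taylor error is $O(1/r)$ per block, summing to $O(L/r)$, which for bounded $r$ is of the same order as the main exponent $cL$ and must be absorbed rather than treated as negligible; and the positive-increment constraint $\rho r^2 + (D_j-D_{j-1})>0$ is only comfortably interior when $r$ is large. None of this is fatal, but it is exactly the kind of bookkeeping the paper's argument avoids by never leaving the exact Gamma density. One specific point to fix: in your part~(ii) you cite Claim~\ref{claim-exponential} for the within-block \emph{maximum} being $\le r/2$ with probability $\ge c_0$, but that claim only controls the density of a single partial sum; to get a uniform-in-$r$ constant for the maximum over a block of length $r^2$ you need an additional ingredient (the paper uses convergence to a Brownian bridge and \eqref{eq-lower-tail} for $r\ge r_0$, handling small $r$ trivially).
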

\begin{proof}
First observe a useful property for exponential variables: for
i.i.d. exponential variables $Y_i$ with mean $\theta_1$ and i.i.d.
exponential variables $Z_i$ with mean $\theta_2$ for any $\theta_1,
\theta_2>0$, we have that for all $k\in \N$ and $z>0$
\begin{equation}\label{eq-identity-in-law}
(Y_1, Y_2, \ldots, Y_k \mid \mbox{$\sum_{i=1}^k$} Y_i = z)
\stackrel{law}{=} (Z_1, Z_2, \ldots, Z_k \mid \mbox{$\sum_{i=1}^k$}
Z_i = z)\,,
\end{equation}
since both vectors are uniformly distributed over $\{(z_1, \ldots,
z_k) \in \mathbb{R}_+^k: \sum_i z_i = z\}$ conditioning on the sums
being $z$ (Note that this property is known in Statistics as ``sufficiency'' of the sample mean for the parameter in the family of Exponential distributions index by the mean). Therefore, we can in what follows assume that $\theta =
\rho$.

We now give a proof for the upper bound. For convenience, we assume
that $r$ is a positive integer. The intuition (also for the lower
bound) is that we can divide $n$ into blocks of size $r^2$, and in
every such a block the fluctuation of the path is of order $r$ and
thus the probability for the path in this block to stay within $[-r,
r]$ is bounded away from 0 and 1. Since the number of blocks is
$n/r^2$, this gives the right type of decay for the lower tail of
fluctuation. Precisely, for $j=1, \ldots, \lfloor n/r^2 \rfloor -
1$, define the event $Q_j$ by
$$Q_j = \cap_{(j-1)r^2\leq k\leq jr^2}\{|\mbox{$\sum_{i=1}^k$} Z_i - \rho k| \leq r\}\,.$$
It is clear that
$$\P(Q_{j+1} \mid Q_1, \ldots, Q_j) \leq \P\left(|\mbox{$\sum_{(j-1)r^2\leq i\leq jr^2}$}Z_i - \rho r| \leq 2r \mid Q_1, \ldots Q_j\right) \leq 1 - 10^{-7}\,,$$
where the last step follows from Claim~\ref{claim-exponential}. This
yields that
$$\P\left(M\leq r \mid \mbox{$\sum_{i=1}^n$} Z_i = \rho n\right) \leq \P\left(Q_j: \forall 1\leq j\leq \lfloor n/r^2 \rfloor - 1\right) \leq (1 - 10^{-7})^{\lfloor n/r^2 \rfloor - 1}\,.$$

Now we turn to the proof of lower bound. For $j = 1, 2, \ldots,
\lfloor n/r^2\rfloor$, define the event
$$R_j = \{|\mbox{$\sum_{i=1}^{jr^2}$} Z_i - \rho j r^2| \leq r/2\}  , \mbox{ and } S_j = \cap_{(j-1)r^2\leq k\leq jr^2}\{|\mbox{$\sum_{i= (j-1)r^2}^k$} Z_i - \rho (k - (j-1)r^2)| \leq r/2\}.$$
It is clear from triangle inequality that
\begin{equation}\label{eq-containment}\cap_{1\leq j\leq n/r^2 - 1} R_j \cap \cap_{1\leq
j\leq n/r^2}S_j \subseteq \{M \leq r\}\,.\end{equation} Furthermore,
by Claim~\ref{claim-exponential} again, we get that (write
$\mathcal{R} = \cap_{1\leq j\leq n/r^2 - 1} R_j$)
\begin{equation}\label{eq-prob-R}\P(\mathcal{R}) \geq \mbox{$\prod_{1\leq j\leq n/r^2 -
1}$}\P(R_j \mid R_1, \ldots, R_{j-1}) \geq 10^{-8
n/r^2}\,.\end{equation} In addition, conditioned on $\sum_{i=
(j-1)r^2}^{j r^2}Z_i = s$, we see that $\frac{1}{\sqrt{r}}(\sum_{i=
(j-1)r^2}^{(j-1)r^2 + tr^2} Z_i)$ for $1\leq t\leq 1$ converges to a
standard Brownian bridge, and thus (see \eqref{eq-lower-tail}) for
$r\geq r_0$ where $r_0$ is a large absolute constant, we have that
$\P(S_j\mid \mathcal{R}, \mbox{$\sum_{i= (j-1)r^2}^{j r^2}$}Z_i = s)
\geq 10^{-2}\,.$ Trivially, for $r\leq r_0$ we have $\P(S_j\mid
\mathcal{R}, \mbox{$\sum_{i= (j-1)r^2}^{j r^2}$}Z_i = s) \geq
10^{-r_0}$. Since given the sum in each block, the variables in
different blocks are independent, we can then deduce that
$\P(\cap_{j=1}^{n/r^2} S_j\mid \mathcal{R}) \geq 10^{-r_0 n/r^2}$.
Combined with \eqref{eq-containment} and \eqref{eq-prob-R}, this
gives the desired lower bound with $C^\star = 3r_0$.
\end{proof}

\subsection{Upper bound}\label{sec:critical-upper}
In this subsection, we prove the upper bound for
Theorem~\ref{thm-critical}. Recall that $c^\star>0$ is the absolute
constant defined in Lemma~\ref{lem-deviation}. Set
\begin{equation}\label{eq-def-alpha}
\alpha = \tfrac{c^{\star}}{27\mathrm{e}}\,.\end{equation} Fix in
this subsection
$$\lambda = \lambda_\alpha = \mathrm{e}^{-1} + \alpha (\log
n)^{-2}\,.$$  By monotonicity, it suffices to give an upper bound on
$L(n, \lambda)$. We start with a simple claim, reducing the
consideration to paths of length between $[\ell, 2\ell)$ for the
purpose of showing $L(n, \lambda) \leq \ell$.
\begin{claim}\label{claim-path-monotone}
The following holds deterministically. Suppose that there exists a
path of length $L \geq \ell$ such that the average weight is $\zeta$
for some $\zeta>0$. Then there exists a path of length between
$[\ell, 2\ell)$ such that the average weight is at most $\zeta$.
\end{claim}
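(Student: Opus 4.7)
The plan is to reduce to a partition argument. First I would dispose of the easy case: if $L < 2\ell$, then $\gamma$ itself has length in $[\ell, 2\ell)$ and average weight $\zeta$, and there is nothing to prove. So assume $L \geq 2\ell$.

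In the main case, I would partition the ordered edge sequence of $\gamma$ into consecutive (contiguous) blocks, each of which is itself a sub-path. Greedily take blocks of length exactly $\ell$ from the start, continuing until the number of remaining edges is strictly less than $2\ell$; by the time this happens, the remaining count is at least $\ell$ (since we only stop peeling when fewer than $2\ell$ are left, and we started peeling only while at least $2\ell$ remained). Declare the remaining edges to form the final block. The result is a partition of $\gamma$ into sub-paths $\gamma_1, \ldots, \gamma_q$, each of length in $[\ell, 2\ell)$, with $\sum_j |\gamma_j| = L$ and $\sum_j X(\gamma_j) = L\zeta$.

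Now I invoke the averaging/pigeonhole principle: the weighted average of the block averages equals
\begin{equation*}
\frac{\sum_j X(\gamma_j)}{\sum_j |\gamma_j|} = \frac{L\zeta}{L} = \zeta,
\end{equation*}
so at least one block $\gamma_{j^\star}$ must satisfy $X(\gamma_{j^\star})/|\gamma_{j^\star}| \leq \zeta$. This $\gamma_{j^\star}$ is the desired sub-path of length in $[\ell, 2\ell)$ with average weight at most $\zeta$.

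There is no real obstacle here; the claim is deterministic combinatorics. The only points to get right are (i) the greedy partition terminates correctly so that every block—including the last—has length in $[\ell, 2\ell)$, which uses precisely the assumption $L \geq 2\ell$ that triggers the loop, and (ii) the fact that a contiguous sub-path of a path is itself a path, so that the conclusion is meaningful. Neither requires any probabilistic input.
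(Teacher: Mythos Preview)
Your argument is correct and is essentially identical to the paper's: both partition $\gamma$ into consecutive sub-paths of length exactly $\ell$ with one leftover block of length in $[\ell,2\ell)$, then apply the pigeonhole/averaging observation that at least one block has average weight at most $\zeta$. The only cosmetic difference is that you separate out the trivial case $L<2\ell$ explicitly.
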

\begin{proof}
Let $\gamma$ be a path of length $L\geq \ell$ with
average weight $\zeta$. Suppose that $\gamma$ consists a sequence of consecutive edges $e_1, \ldots, e_L$. Write $k = \lfloor L/
\ell \rfloor$, and we divide $\gamma$ into a collection of
$k$ edge-disjoint paths where $\gamma_i$ consists of edges $e_{i\ell+1}, \ldots, e_{(i+1)\ell}$ for $0\leq i< k-1$ and $\gamma_{k-1}$ consists of edges $e_{(k-1)\ell + 1}, \ldots, e_{L}$. Obviously $|\gamma_i| = \ell$ for all $0\leq i<k-1$ and
$|\gamma_{k-1}|\in [\ell, 2\ell)$.
Since $\gamma = \cup_{i=0}^{k-1} \gamma_i$, we see that at least one of paths $\gamma_i$ must have
average weight at most $\zeta$, as required.
\end{proof}

We now show that there cannot exist a long path with small average
weight but even moderately large deviation.
\begin{lemma}\label{lem-upper-critical-1}
For $\ell = (\log n)^3/\alpha$, we have
$$\P\left(\exists \ell\leq \ell'<2\ell, \gamma\in \Gamma_{\ell'}: X(\gamma) \leq \lambda \ell', M(\gamma) \geq 3 \log n\right) \to 0\,.$$
\end{lemma}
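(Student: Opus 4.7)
The plan is to reduce the stated event to the event $E$ of Lemma~\ref{lem-atypical}, at the cost of a simple arithmetic check. Concretely, I would argue that whenever a path $\gamma\in\Gamma_{\ell'}$ with $\ell\le \ell'<2\ell$ satisfies both $X(\gamma)\le \lambda \ell'$ and $M(\gamma)\ge 3\log n$, then some prefix or suffix of $\gamma$ is itself a shorter path witnessing $E$ — a path whose total weight is at most $\mathrm{e}^{-1}\cdot(\text{its length})-\log n$.

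To see this, I pick $k\in[1,\ell']$ at which the supremum in \eqref{eq-def-deviation} is attained, so that $|\sum_{i=1}^k X_{e_i} - (k/\ell') X(\gamma)|\ge 3\log n$. If the signed deviation is $\le -3\log n$, then the prefix $v_0,e_1,\ldots,e_k,v_k$ lies in $\Gamma_k$ and has total weight at most
$$\tfrac{k}{\ell'}X(\gamma)-3\log n\ \le\ \lambda k-3\log n\,.$$
If the signed deviation is $\ge 3\log n$, I use $X(\gamma)-\sum_{i=1}^k X_{e_i}$ to see that the complementary suffix lies in $\Gamma_{\ell'-k}$ and has total weight at most $\lambda(\ell'-k)-3\log n$. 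Either way, on the bad event there exist $k'\le \ell'<2\ell$ and $\gamma'\in\Gamma_{k'}$ with $X(\gamma')\le \lambda k' - 3\log n$.

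The second step is arithmetic, using $\ell=(\log n)^3/\alpha$ and $\lambda=\mathrm{e}^{-1}+\alpha(\log n)^{-2}$: for every $k'\le 2\ell$,
$$\lambda k'-3\log n \ =\ \tfrac{k'}{\mathrm{e}}+\alpha k'(\log n)^{-2}-3\log n\ \le\ \tfrac{k'}{\mathrm{e}}+2\log n-3\log n\ =\ \tfrac{k'}{\mathrm{e}}-\log n\,.$$
Hence the existence of such $\gamma'$ is subsumed by the event $E$ of Lemma~\ref{lem-atypical} (applied with its index $\ell$ taken to be $k'$, which is at most $n$ for large $n$), and Lemma~\ref{lem-atypical} gives $\P(E)\to 0$. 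Combining the two steps yields the claim.

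I do not foresee any real obstacle here: the reduction is bookkeeping, and all the probabilistic content already lives in Lemma~\ref{lem-atypical}. The only thing to watch is that the slack $3\log n$ in the event $M(\gamma)\ge 3\log n$ must strictly dominate $\alpha\cdot 2\ell\cdot(\log n)^{-2}=2\log n$, which is precisely why the deviation threshold is set to $3\log n$. This reduction is insensitive to the specific value $\alpha=c^{\star}/(27\mathrm{e})$ fixed in \eqref{eq-def-alpha}; any sufficiently small $\alpha$ would serve. The tight choice of $\alpha$ is being saved for the matching second-moment lower bound later in the section.
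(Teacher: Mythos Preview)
Your proposal is correct and follows essentially the same argument as the paper: split $\gamma$ at the index where the deviation is attained, observe that one of the two sub-paths has weight below $\lambda$ times its length minus $3\log n$, and then use $\ell'<2\ell=2(\log n)^3/\alpha$ to absorb the $\alpha(\log n)^{-2}$ excess over $\mathrm{e}^{-1}$ into the slack $3\log n-2\log n=\log n$, landing inside the event $E$ of Lemma~\ref{lem-atypical}. The only minor inaccuracy is your closing remark: the specific value of $\alpha$ in \eqref{eq-def-alpha} is not saved for the second-moment lower bound but for the companion first-moment Lemma~\ref{lem-upper-critical-2}, where the exponent $c^\star/(9\alpha)$ must beat $2\mathrm{e}$.
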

\begin{proof}
Suppose there exists $\ell \leq \ell'<2\ell$ and $\gamma\in
\Gamma_{\ell'}$ such that $X(\gamma) \leq \lambda \ell'$ and
$M(\gamma) \geq 3 \log n$. Denote by $\gamma = v_0, e_1, v_1,
\ldots, e_{\ell'}, v_{\ell'}$, and let $\ell^*$ be such that
$$M(\gamma) = |\mbox{$\sum_{i=1}^{\ell^*}$} X_{e_i} - \tfrac{\ell^*}{\ell'} X(\gamma)|\,.$$
Consider two sub-paths $\gamma_1 = v_0, e_1, \ldots, e_{\ell^*}, v_{\ell^*}$ and
$\gamma_2 = v_{\ell^*}, e_{\ell^*+1}, \ldots, e_{\ell'}, v_{\ell'}$.
By our assumption on $\gamma$ and definition of $\ell^*$, we have
$$\mbox{either } \quad  X(\gamma_1) \leq \mathrm{e}^{-1} \ell^{*} - \log
n \quad \mbox{ or } \quad  X(\gamma_2) \leq \mathrm{e}^{-1} (\ell' -
\ell^*) - \log n\,.$$ This implies that
$$\{\exists \gamma\in \Gamma_{\ell'}: X(\gamma) \leq \lambda \ell', M(\gamma) \geq 3 \log n\} \subseteq E_n\,,$$
where $E_n$ is the event defined in \eqref{eq-def-E}. The desired
estimate now follows from Lemma~\ref{lem-atypical}.
\end{proof}

We next turn to control paths with small deviation.
\begin{lemma}\label{lem-upper-critical-2}
For $\ell =  (\log n)^3/\alpha$, we have
$$\P\left(\exists \ell \leq \ell'<2\ell, \gamma\in \Gamma_{\ell}: X(\gamma) \leq \lambda \ell, M(\gamma) \leq 3 \log n\right) \to 0\,.$$
\end{lemma}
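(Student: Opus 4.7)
The plan is a first--moment calculation. I take a union bound over $\ell'\in[\ell,2\ell)$ and over all $\gamma\in\Gamma_{\ell'}$, using $|\Gamma_{\ell'}|\leq n^{\ell'+1}$, and bound the per--path probability by integrating the $\Gamma(n,\ell')$ density against Lemma~\ref{lem-deviation}. The choice $\alpha=c^\star/(27\mathrm{e})$ in \eqref{eq-def-alpha} is calibrated so that the deviation penalty $\exp(-c^\star\ell'/(9(\log n)^2))$ beats the super--expectation gain $(\lambda\mathrm{e})^{\ell'}=(1+\alpha\mathrm{e}/(\log n)^2)^{\ell'}$ by a polynomial factor in $n$ once $\ell'\geq(\log n)^3/\alpha$.

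Fix $\gamma\in\Gamma_{\ell'}$. The event $\{X(\gamma)\leq\ell'/\mathrm{e}-\log n\}$ is contained in the event $E$ of Lemma~\ref{lem-atypical}, whose total probability is already $o(1)$; so I may restrict to $X(\gamma)\in[\ell'/\mathrm{e}-\log n,\lambda\ell']$, on which the rate $\rho\df X(\gamma)/\ell'$ lies in $[1/\mathrm{e}-o(1),\lambda]\subset[1/4,4]$. For any such $s$, the identity in law \eqref{eq-identity-in-law} lets me re-scale the edge means, and Lemma~\ref{lem-deviation} with $r=3\log n$ (using $\ell'\geq r^2$) gives
\[
\P(M(\gamma)\leq 3\log n\mid X(\gamma)=s)\;\leq\;\exp\!\bigl(-c^\star\ell'/(9(\log n)^2)\bigr)
\]
uniformly in $s$.

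Integrating this conditional bound against $f_{n,\ell'}$ (using $\mathrm{e}^{-s/n}\leq 1$ together with Stirling's formula) and multiplying by $|\Gamma_{\ell'}|\leq n^{\ell'+1}$, the union bound at a fixed $\ell'$ becomes
\[
C\,n\cdot(\lambda\mathrm{e})^{\ell'}\cdot\exp\!\bigl(-c^\star\ell'/(9(\log n)^2)\bigr)/\sqrt{\ell'}\;\leq\;C\,n\exp\!\left(-\tfrac{2c^\star\ell'}{27(\log n)^2}\right),
\]
where the last inequality uses $\log(\lambda\mathrm{e})\leq\alpha\mathrm{e}/(\log n)^2=c^\star/(27(\log n)^2)$. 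At $\ell'=\ell$ the exponent equals $-2\mathrm{e}\log n$, giving $n^{1-2\mathrm{e}}$; for $\ell<\ell'<2\ell$ it is even smaller. Summing over the $O((\log n)^3)$ values of $\ell'$ only multiplies by a polylogarithmic factor, and since $2\mathrm{e}>1$ the total probability is $o(1)$.

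The one delicate point is ensuring Lemma~\ref{lem-deviation} is applicable, which requires $\rho\in[1/4,4]$ on the integration range; this is precisely why the lower--tail event $E$ must be peeled off first via Lemma~\ref{lem-atypical}. Beyond this the argument is a calibrated first--moment bound with no further obstacle: the specific constant $\alpha=c^\star/(27\mathrm{e})$ is simply chosen large enough to absorb the overhead $n^{\ell'+1}/n^{\ell'}=n$ from counting paths while keeping $\alpha\mathrm{e}<c^\star/9$ strictly.
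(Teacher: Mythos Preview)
Your proof is correct and follows essentially the same first--moment/union--bound approach as the paper: bound $\P(X(\gamma)\leq\lambda\ell',\,M(\gamma)\leq 3\log n)$ by integrating the $\Gamma(n,\ell')$ density against the upper bound of Lemma~\ref{lem-deviation}, multiply by $|\Gamma_{\ell'}|\leq n^{\ell'+1}$, and sum over $\ell'\in[\ell,2\ell)$. The only difference is that you explicitly justify the hypothesis $\rho\in[1/4,4]$ of Lemma~\ref{lem-deviation} by first peeling off the lower tail via Lemma~\ref{lem-atypical}, a routine check the paper's terse proof leaves implicit.
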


\begin{proof}
Fix an $\ell'$ with $\ell\leq \ell'<2\ell$ and fix  $\gamma\in
\Gamma_{\ell'}$. By \eqref{eq-gamma-distribution} and
Lemma~\ref{lem-deviation}, we obtain that
$$\P\left(X(\gamma) \leq \lambda \ell', M(\gamma) \leq 3 \log n\right) =O(1)n^{-\ell'} \ell'^{-1/2} \mathrm{e}^{2\mathrm{e}\log n} \mathrm{e}^{-c^\star \log n/(9\alpha)} = O(1)n^{-\ell'} n^{- \mathrm{e}}\,,$$
where the last equality follows from the definition of $\alpha$ in
\eqref{eq-def-alpha}. Noting that $|\Gamma_\ell'| \leq n^{\ell'+1}$,
we deduce the desired result by first applying a union bound over
$\gamma\in \Gamma_{\ell'}$ and then over $\ell\leq \ell'<2\ell$.
\end{proof}

The upper bound for Theorem~\ref{thm-critical} is an immediate
consequence of Lemmas~\ref{lem-upper-critical-1} and
\ref{lem-upper-critical-2}, together with
Claim~\ref{claim-path-monotone}.

\subsection{Lower bound}\label{sec:critical-lower}
In this subsection, we prove the lower bound for
Theorem~\ref{thm-critical}. Fix $\lambda = \mathrm{e}^{-1} - (\log
n)^{-2}$ in this subsection, and it suffices to establish the lower
bound on $L(n, \lambda)$. Let $c, \delta>0$ be two small absolute
constants to be selected. For $1\leq \ell\leq n$ and $\gamma \in
\Gamma_\ell$, define
\begin{equation}\label{eq-def-F}
F_\gamma = \{\lambda \ell - 1 \leq X(\gamma)\leq \lambda \ell,
M(\gamma) \leq \delta \log n\}\,.
\end{equation}

By \eqref{eq-gamma-distribution} and Lemma~\ref{lem-deviation}, we
obtain that for all $\gamma\in \Gamma_\ell$ with $\ell = c (\log
n)^3$
\begin{equation}\label{eq-prob-F} \P(F_\gamma) \geq \tfrac{1}{100}
n^{-\ell} \ell^{-1/2} n^{-ec - cC^\star/\delta^2}\,,\end{equation}
where $C^\star$ is the absolute constant from
Lemma~\ref{lem-deviation}. Defining $$N = \mbox{$ \sum_{\gamma\in
\Gamma_\ell}$}\one_{F_\gamma}\,,$$ we see that the first moment of
$N$ would be large if we select $c, \delta$ properly. The key issue
here is to bound the second moment of $N$.

\begin{lemma}\label{lem-second-moment}
Consider $\ell = c (\log n)^3$. For any $\gamma\in \Gamma_\ell$, we
have that
$$\mbox{$\sum_{\gamma'\in \Gamma_\ell}$} \P(F_\gamma \cap F_{\gamma'}) \leq
\P(F_\gamma) \cdot (\E N + O(1)\ell^3 n^\delta)\,.$$
\end{lemma}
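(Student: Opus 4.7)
The plan is to split the sum $\sum_{\gamma'} \P(F_\gamma \cap F_{\gamma'})$ according to the overlap structure between $\gamma'$ and $\gamma$: let $k$ count the common edges and $s$ the number of maximal common segments they form in $\gamma$. For $k = 0$ (edge-disjoint $\gamma'$), the events $F_\gamma$ and $F_{\gamma'}$ depend on disjoint families of exponential weights and are hence independent, so the resulting contribution is $\P(F_\gamma)\sum_{\gamma'\text{ disjoint}}\P(F_{\gamma'}) \leq \P(F_\gamma) \cdot \E N$, which supplies the $\E N$ summand on the right.

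For $k \geq 1$, I would fix $(k,s)$ and condition on the weights $(X_e)_{e\in\gamma}$. Writing $Y$ for the total weight on the common edges, a telescoping argument over the $s$ segments based on the definition \eqref{eq-def-deviation} gives $Y = (k/\ell)X(\gamma) + R$ with $|R| \leq 2sM(\gamma)$; on $F_\gamma$ this yields $Y = k\lambda + O(s\delta\log n)$. The remaining $\ell-k$ edges of $\gamma'$ carry a Gamma$(\ell-k, n)$ weight $W$ independent of $\gamma$, and $F_{\gamma'}$ forces $W$ into a unit interval located within $O(s\delta\log n)$ of the nominal value $\lambda(\ell-k)$. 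Substituting into \eqref{eq-gamma-distribution} and using Stirling's formula, the Gamma density at such a shifted point is bounded by
\[
C(\lambda\mathrm{e})^{\ell-k}(\ell-k)^{-1/2}n^{-(\ell-k)} n^{O(s\delta)},
\]
where the factor $n^{O(s\delta)}$ absorbs the shift via $\exp(O(s\delta\log n)/\lambda) = n^{O(s\delta)}$.

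To combine this with a count of pairs, I would invoke Lemma~\ref{lem-counting}; by context this should give $A_{k,s} \leq O(\ell)^{O(s)}\cdot n^{\ell-k}$, the $n^{\ell-k}$ tallying free vertex choices along the non-common parts of $\gamma'$ and $\ell^{O(s)}$ selecting the positions of the segments in $\gamma$ and $\gamma'$. The $n^{\ell-k}$ factors then cancel and the non-disjoint contribution is bounded by
\[
\sum_{k\geq 1,\ s\geq 1} O(\ell)^{O(s)}(\lambda\mathrm{e})^{\ell-k}(\ell-k)^{-1/2} n^{O(s\delta)}.
\]
The dominant term is $s=1$: there are $O(\ell^2)$ choices of position for a single shared segment, $(\lambda\mathrm{e})^{\ell-k}$ is tiny, and summing over $k\leq\ell$ contributes a further factor $\ell$, giving $O(\ell^3 n^\delta)$. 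The higher-$s$ terms contribute less once $\delta$ is small enough that the combinatorial decay in $s$ dominates $n^{O(s\delta)}$.

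The main obstacle is controlling the multiplicative factor $n^{O(s\delta)}$ from the density shift: it threatens to accumulate as $s$ grows, so one must verify that Lemma~\ref{lem-counting} provides enough decay in $s$ for the geometric series to converge and for $s=1$ to emerge as the leading contribution. A secondary point is checking that the shifted evaluation point $\lambda(\ell-k) + O(s\delta\log n)$ still lies in the tail regime where the Stirling approximation to the Gamma density is valid, which is automatic since $\lambda < 1/\mathrm{e}$ keeps it well below the mode of $W$.
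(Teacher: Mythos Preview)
Your overall strategy is exactly the paper's: split by overlap structure $(k,s)=(|E(\gamma)\cap E(\gamma')|,\theta(\gamma,\gamma'))$, use independence for $k=0$, and for $k\geq 1$ bound $\P(F_{\gamma'}\mid F_\gamma)$ by the Gamma tail on the $\ell-k$ fresh edges with the shift $O(s\delta\log n)$ coming from the deviation bound. The telescoping estimate $Y=k\lambda+O(s\delta\log n)$ and the resulting density bound $O(1)\,n^{-(\ell-k)}n^{O(s\delta)}$ match the paper's computation.

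The genuine gap is in your counting step. You guess that Lemma~\ref{lem-counting} gives $|A_{k,s}|\leq O(\ell)^{O(s)}\,n^{\ell-k}$, with the exponent of $n$ equal to the number of non-common \emph{edges}. That is too weak, and with that bound your argument fails: after the $n^{\ell-k}$ cancels, each $(k,s)$ term contributes $\ell^{O(s)}n^{O(s\delta)}$, which \emph{grows} in $s$, so there is no ``combinatorial decay in $s$'' in your formula and the sum over $s$ blows up (it is dominated by $s\asymp\ell$, giving something super-polynomial in $n$).

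What actually makes the argument work is the vertex count in Lemma~\ref{lem-vertex-edge}: the $s$ shared segments carry $k+s$ shared \emph{vertices}, not $k$, so the number of free vertex choices for $\gamma'$ is only $\ell+1-k-s$. Lemma~\ref{lem-counting} therefore gives
\[
|A_{k,s}|\leq \ell^{3s}\,n^{\ell+1-k-s},
\]
with an extra factor $n^{-s}$ that you are missing. This $n^{-s}$ is precisely the decay in $s$ needed to beat $n^{O(s\delta)}$ (and the $\ell^{3s}$ growth), since $\ell^3 n^{O(\delta)}\ll n$. With it, the sum over $1\leq s\leq k\leq \ell$ is geometric in $s$ and the whole non-disjoint contribution is $O(\ell^{O(1)}n^{O(\delta)})$, as claimed. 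So the missing idea is not a technicality but the key combinatorial observation: each additional common segment costs a full factor of $n$ in the path count.
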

In order to prove the preceding lemma, one needs to study the
correlation structure between $\gamma$ and all other paths in
$\Gamma_\ell$. In order to have a global control of the correlation
between $\gamma$ and all other paths, a natural strategy is to first
select different scales of correlations and then estimate the
cardinality of the paths that fall into each scale. This strategy
was implemented in the case of branching Brownian motion where the
scale for correlation is chosen to be the number of common edges
between a path and the path $\gamma$. In the mean-field setting, we
do not really have a tree structure (as two paths can bifurcate and
merge and then bifurcate...). In addition, merely the number of
common edges does not seem to fully characterize the correlation
between two paths. Therefore, we need to choose an auxiliary
quantity which together with the number of common edges, offers an
effective measurement of the correlation. We elaborate in what
follows.

For a path $\gamma$, denote by $E(\gamma)$ the collection of edges
in $\gamma$. For $S\subseteq E(\gamma)$, we call a segment of
$\gamma$ an $S$-component if it is a maximal segment of $\gamma$
where all the edges belong to $S$.
\begin{defn}\label{def-theta} For two paths $\gamma$ and $\gamma'$, we define a functional
$\theta(\gamma, \gamma')$ to be the number of $S$-components of
$\gamma$ where $S = E(\gamma) \cap E(\gamma')$.\end{defn} The
functional $\theta(\gamma, \gamma')$ turns out to be a good
additional measurement for the correlation between $\gamma$ and
$\gamma'$. Given a collection of edges $S$, denote by $V(S)$ the
collection of vertices which are endpoints for edges in $S$. The
next simple observation is of crucial importance for our proof.

\begin{lemma}\label{lem-vertex-edge}
For $1\leq \ell\leq n$ and $\gamma, \gamma'\in \Gamma_\ell$, Write
$S = E(\gamma) \cap E(\gamma')$. We have
$$|V(S)| = |S| + \theta(\gamma, \gamma')\,.$$
\end{lemma}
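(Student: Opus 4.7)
The identity is a combinatorial bookkeeping statement, so the plan is to decompose $S$ along $\gamma$ and count vertices by components.

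First I would view $S \subseteq E(\gamma)$ as a subset of the edges of $\gamma$ and list its maximal runs of consecutive edges as traversed by $\gamma$. By Definition~\ref{def-theta}, there are exactly $\theta(\gamma,\gamma')$ such runs (the $S$-components of $\gamma$). Let $k_1,\dots,k_{\theta(\gamma,\gamma')}$ denote their lengths in edges; then $k_1+\cdots+k_{\theta(\gamma,\gamma')}=|S|$, and each $S$-component, being a subpath of $\gamma$, is itself a simple path on $k_i+1$ vertices.

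Second, I would argue that the vertex sets of distinct $S$-components are pairwise disjoint. Since $\gamma$ is a simple path (no repeated vertices), any two edge-disjoint sub-walks of $\gamma$ are automatically vertex-disjoint as well; in particular this applies to any two different $S$-components. Third, I would show that the union of their vertex sets equals $V(S)$. The inclusion $\subseteq$ is clear because each vertex of an $S$-component is incident to an $S$-edge lying in that component (interior vertices trivially so, and boundary vertices as endpoints of the first or last edge of the component). For $\supseteq$, any $v\in V(S)$ is an endpoint of some edge $e\in S\subseteq E(\gamma)$, and $e$ lies in the unique $S$-component of $\gamma$ that contains it, so $v$ is a vertex of that component.

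Combining the three steps yields
\[
|V(S)| \;=\; \sum_{i=1}^{\theta(\gamma,\gamma')} (k_i+1) \;=\; |S| + \theta(\gamma,\gamma'),
\]
as required. The argument is elementary; the only point that deserves care, and that I expect to be the main (mild) obstacle, is the disjointness step: it relies essentially on $\gamma$ being a simple path, since two $S$-components separated by a non-$S$ edge of a general walk could a priori meet again at a vertex without sharing an edge, and such a coincidence would inflate the count of $S$-components relative to $|V(S)|-|S|$.
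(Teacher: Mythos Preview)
Your proof is correct and follows exactly the paper's approach: decompose $S$ into its $S$-components along $\gamma$, observe these are pairwise vertex-disjoint, and sum $(k_i+1)$ over the components. One minor wording issue: the assertion that ``any two edge-disjoint sub-walks of $\gamma$ are automatically vertex-disjoint'' is false as stated (two adjacent subpaths of a simple path share an endpoint); the fact you actually need --- that distinct $S$-components are vertex-disjoint --- holds because maximality forces any two of them to be separated by at least one non-$S$ edge of the simple path $\gamma$, a point you yourself articulate correctly in your closing paragraph.
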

\begin{proof}
By definition, there exist no edge in $S$ that crosses different
$S$-components of $\gamma$, and there exists no vertex in $V(S)$
that is belonging to different $S$-components. Therefore, we can
analyze each $S$-component separately. In addition, it is obvious
that for each such $S$-component, the number of vertices is larger
than the number of edges by 1. Summing over all the $S$-components,
we complete the proof of the lemma.
\end{proof}
Given a path $\gamma'\in \Gamma_\ell$, we now partition $\Gamma_\ell$
based on its correlation with $\gamma$, i.e., based on the tuple
$(\theta(\gamma, \gamma'), |E(\gamma)\cap E(\gamma')|)$. Precisely,
for all integers $i\leq j$,  define
\begin{equation}\label{eq-def-A-i-j}
A_{i, j} \equiv A_{i, j}(\gamma) := \{\gamma'\in \Gamma_\ell:
\theta(\gamma, \gamma') = i, E(\gamma) \cap E(\gamma') =
j\}\,.\end{equation} It is now natural to control the cardinality of
$A_{i, j}$. One could prove more precise estimate on $A_{i,j}$, but
for our purpose, the following is sufficient.
\begin{lemma}\label{lem-counting}
For any $1\leq \ell\leq n$ and any $\gamma\in \Gamma_\ell$, we have
that for any non-negative integers $i \leq j$
$$|A_{i,j}(\gamma)| \leq \binom{\ell+1}{2i} \binom{n-i-j}{\ell+1-i-j} 2^i (\ell+1-j)! \leq \ell^{3i} n^{\ell+1-i-j}\,.$$
\end{lemma}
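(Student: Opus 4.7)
My plan is to prove Lemma~\ref{lem-counting} by constructing each $\gamma'\in A_{i,j}(\gamma)$ from four independent combinatorial choices. The crucial structural step is: for every $\gamma'\in A_{i,j}(\gamma)$, each $S$-component $I$ of $\gamma$ is traversed by $\gamma'$ as a single contiguous sub-path, in one of two directions. Indeed, if $w$ is an interior vertex of $I$, then both edges of $\gamma$ at $w$ lie in $S\subseteq E(\gamma')$; simplicity of $\gamma'$ forces these to be exactly the two edges of $\gamma'$ at $w$, so $\gamma'$ enters $I$ at one endpoint and leaves at the other without interruption. A small side observation, using the maximality of $S$-components, rules out two distinct $S$-components of $\gamma$ sharing a vertex.

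Combined with Lemma~\ref{lem-vertex-edge} (which gives $|V(S)|=i+j$, hence $\ell+1-i-j$ ``private'' vertices of $\gamma'$ lying outside $V(S)$), this structural step lets me view $\gamma'$ as an ordered sequence of $\ell+1-j$ \emph{items}: the $i$ sub-paths $I_1,\ldots,I_i$, each with a chosen orientation, interleaved with the $\ell+1-i-j$ private vertices. Consecutive items are joined by a private edge of $K_n$, which always exists. Conversely, any such ordered sequence of items produces a valid path in $A_{i,j}(\gamma)$, and distinct sequences yield distinct $\gamma'$.

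I would then enumerate $\gamma'\in A_{i,j}(\gamma)$ by four choices, each contributing one of the factors in the stated bound. First, pick the $2i$ endpoints of the $S$-components on $\gamma$ from the $\ell+1$ vertices of $\gamma$: at most $\binom{\ell+1}{2i}$ ways. Second, choose the $\ell+1-i-j$ private vertices from the $n-i-j$ vertices outside $V(S)$: $\binom{n-i-j}{\ell+1-i-j}$ ways. Third, order the $\ell+1-j$ items along $\gamma'$: $(\ell+1-j)!$ ways. Fourth, orient each $S$-component: $2^i$ ways. Since $\gamma'$ determines $S$ uniquely and hence all four data, the product bounds $|A_{i,j}(\gamma)|$, giving the first inequality. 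The second inequality then follows from the routine estimates $\binom{\ell+1}{2i}\cdot 2^i\le (\ell+1)^{2i}$ (via $(2i)!\ge 2^i$) and $\binom{n-i-j}{\ell+1-i-j}(\ell+1-j)!\le n^{\ell+1-i-j}(\ell+1)^i$ (by writing out the falling factorials), absorbing the harmless factor $(\ell+1)^{3i}/\ell^{3i}$ into the slack.

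The main obstacle is the contiguous-traversal claim, which compensates for the absence of a true tree structure in the mean-field setting and is what makes the enumeration tractable; without it, sub-paths of an $S$-component could in principle be visited in fragments and the count would explode. The functional $\theta(\gamma,\gamma')$ was evidently introduced precisely so that Lemma~\ref{lem-vertex-edge} gives a clean count of the private vertices, which in turn makes the four-stage construction close off with the stated product.
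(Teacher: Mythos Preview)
Your proof is correct and follows essentially the same enumeration as the paper: choose the $i$ segments on $\gamma$ via their $2i$ endpoints, choose the remaining $\ell+1-i-j$ vertices from $V\setminus V(S)$, then orient each segment and permute the $\ell+1-j$ items. Two minor remarks: your ``Conversely'' sentence is not quite true (some sequences create extra common edges with $\gamma$ and thus land outside $A_{i,j}$), but you do not actually use it --- only the injection $\gamma'\mapsto(\text{four data})$ is needed for the upper bound, and you state that correctly in the next paragraph; and your arithmetic for the second inequality gives $(\ell+1)^{3i}$ rather than $\ell^{3i}$, which is a cosmetic discrepancy the paper itself shares (it uses $(\ell+1)^{3i}$ in the applications).
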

\begin{proof} In order to bound $|A_{i, j}|$, we
consider the following procedure to generate a path in $A_{i, j}$:
\begin{enumerate}
\setlength{\itemsep}{1pt}
  \setlength{\parskip}{0pt}
  \setlength{\parsep}{0pt}
\item Select $i$ vertex disjoint segments from $\gamma$ such that
the total number of edges is $j$ (and thus the total number of
vertices is $i+j$ by Lemma~\ref{lem-vertex-edge}).
\item Select $\ell+1 - i -j$ vertices from remaining $n - i- j$
vertices in the graph.
\item Choose a direction for each of the segment (two options for every segment). Then take each of the $i$ segments as an individual element and permute
these $i$ elements and $\ell+1-i-j$ vertices selected from step 2,
such that no more edges in $\gamma$ will be introduced when the
permutation is viewed as a path.
\end{enumerate}
Clearly, the cardinality of $A_{i, j}$ is bounded by the product of
the number of choices $N_k$ (for $k=1,2,3$) in each step. For Step
1, we see that the choice for the edges is complete determined by
the $2i$ endpoints selected from the path $\gamma$ and vice versa,
and thus $N_1 \leq \binom{\ell+1}{2i}$. For Step 2, we have $N_2 =
\binom{n-i-j}{\ell+1-i-j}$. For Step 3, it is obvious that $N_3 \leq
2^i (\ell+1-j)!$. Taking a product for $N_1$, $N_2$ and $N_3$, we
complete the proof of the lemma.
\end{proof}
We are now ready to give
\begin{proof}[Proof of Lemma~\ref{lem-second-moment}]
For $\ell = c (\log n)^3$ and $\gamma\in \Gamma_\ell$, we consider
$\Gamma_\ell$ as a union over $A_{i, j}$. For $i = 0$, the only
legitimate choice of $j$ is also 0, and in this case $E(\gamma')\cap
E(\gamma) = \emptyset$ for all $\gamma' \in A_{0, 0}$. Therefore,
the events $F_\gamma$ and $F(\gamma')$ are independent. Thus,
\begin{equation}\label{eq-independent-case}
\mbox{$\sum_{\gamma'\in A_{0, 0}}$} \P(F_\gamma \cap F_{\gamma'}) =
\mbox{$\sum_{\gamma'\in A_{0, 0}}$} \P(F_\gamma)\cdot\P(F_{\gamma'})
\leq \P(F_\gamma) \E N\,.\end{equation} Since $|E(\gamma)\cap
E(\gamma')| \geq \theta(\gamma, \gamma')$ always, we next consider
$1\leq i\leq j\leq \ell$. Take $\gamma'\in A_{i, j}$, and we wish to
bound the probability for the event $F_{\gamma'}$ conditioned on
$F_\gamma$. Write $S = E(\gamma) \cap E(\gamma')$ and $S' =
E(\gamma') \setminus S$. Conditioned on $F_\gamma$ (see the
definition of $F_\gamma$ in \eqref{eq-def-F}), we have that
$$\mbox{$\sum_{e\in S}$} X_e \geq |S| \lambda -1 - 2i\delta\log n\,.$$
Therefore, we obtain that
\begin{equation}\label{eq-condition-F}
\P(F_{\gamma'} \mid F_{\gamma}) \leq \P\left(\mbox{$\sum_{e\in S'}$}
X_e \leq \lambda |S'| + 1 + 2i\delta \log n \mid F_\gamma\right) =
\P\left(\mbox{$\sum_{e\in S'}$} X_e \leq \lambda |S'| + 1 +
2i\delta\log n\right)\,,\end{equation} where in the last inequality
we used independence of these exponential variables $X_e$. Recalling
that $|S'| = \ell-j$, we get from \eqref{eq-gamma-distribution} that
$$\P(F_{\gamma'} \mid F_{\gamma}) \leq O(1) n^{-(\ell - j)} n^{2i\delta}\,.$$
By Lemma~\ref{lem-counting}, we see that $A_{i, j} \leq
\ell^{3i} n^{\ell + 1 - i - j}$. A simple union bound then
yields that
$$\mbox{$\sum_{\gamma'\in A_{i, j}}$} \P(F_{\gamma'} \mid F_\gamma) \leq O(n) \ell^{3i} n^{-(1-2\delta) i }\,.$$
Summing over $1\leq i\leq j\leq \ell$, we then obtain that
$$\mbox{$\sum_{1\leq i\leq j\leq \ell} \sum_{\gamma' \in A_{i, j}}$} \P(F_{\gamma'} \mid F_\gamma)\leq O(1)\ell^5 n^{2\delta}\,.$$
Combined with \eqref{eq-independent-case}, it completes the proof of
the lemma.
\end{proof}

We next conclude this subsection with the proof for the lower bound
on $L(n, \lambda)$.

\begin{proof}[Proof of Theorem~\ref{thm-critical}: lower bound] Set
$\delta = \tfrac{1}{10}\min(1, 1/C^\star)$ and $c = \delta^3$. Since
$|\Gamma_\ell| = (1+o(1)) n^{\ell+1}$, the estimate
\eqref{eq-prob-F} then gives that for sufficiently enough $n$
$$\E N \geq n^{4/5}\,.$$
Meanwhile, by Lemma~\ref{lem-second-moment}, we have that
$$\E N^2 \leq \E N (\E N + n^{1/5}) = (1+o(1)) (\E N)^2\,.$$
At this point, a simple application of Chebyshev's inequality gives
that $\P(N>0)\to 1$ as $n\to \infty$, completing the proof for the
lower bound.
\end{proof}

\section{The existence of second transition point}
Throughout this section, we let $\lambda = \mathrm{e}^{-1} +
\epsilon $ and assume that $\epsilon \geq \beta (\log n)^{-2}$ for
an absolute large constant $\beta \geq 10$ to be specified later.
The goal of this subsection is to demonstrate the existence of
another phase transition at a point in $[\mathrm{e}^{-1} + \alpha
(\log n)^{-2}, \mathrm{e}^{-1} + \beta (\log n)^{-2}]$. To this end,
we prove Theorem~\ref{thm-supcritical} in this section, whose main
content is a polynomial lower bound on $L(n, \lambda)$ when $\lambda
- \mathrm{e}^{-1} \geq \beta (\log n)^{-2}$.

The upper bound for Theorem~\ref{thm-supcritical} follows from a
straightforward first moment computation, as quickly incorporated in
what follows.
\begin{proof}[Proof of Theorem~\ref{thm-supcritical}: upper bound]
Consider $\ell \geq 6\epsilon n$. For any $\gamma\in \Gamma_{\ell}$,
we have from \eqref{eq-gamma-distribution} that
$$\P(X(\gamma) \leq \lambda \ell) = O(1) ((\mathrm{e}^{-1}+\epsilon)\ell)^{\ell-1} \tfrac{1}{n^{\ell} (\ell-1)!} = O(1/\sqrt{\ell}) \mathrm{e}^{\epsilon \mathrm{e} \ell} \,.$$
In addition, we get that $|\Gamma_\ell| = \prod_{i=0}^{\ell}(n-i)
\leq n^{\ell+1} \mathrm{e}^{-\frac{\ell^2}{2n}}$. A simple union
bound over $\Gamma_\ell$ and $6\epsilon n\leq \ell\leq 12\epsilon n$
then gives that
$$\P\left(\exists 6\epsilon n\leq \ell\leq 12\epsilon n, \gamma\in \Gamma_{\ell}: X(\gamma) \leq \lambda \ell\right)\to 0\,.$$
Combined with Claim~\ref{claim-path-monotone}, this gives the upper
bound.
\end{proof}

\subsection{Deviation of typical light path:
revisited}\label{sec:deviation-revisited}

In view of the lower tail of the deviation as in
Lemma~\ref{lem-deviation}, it is obvious that when the length of the
path gets large, the tail gets extremely small and thus needs to be
tracked down carefully. In particular, we need an estimate for the
lower tail of the deviation given the values of some of the
variables along the path. We handle this delicate issue in this
subsection.
\begin{lemma}\label{lem-p-s}
Let $Z_i$ be i.i.d.\ exponential variables for $i\in \N$. Let
$1/4\leq \rho\leq 1$ and $M_n$ be defined as in \eqref{eq-def-M}.
Write for all $s\in \N$
\begin{equation}\label{eq-def-p-s} p_s = \P\left(M_s \leq r \mid
\mbox{$\sum_{i=1}^s$} Z_i = \rho s\right)\,.\end{equation} Then for
$j, k\in \N$, we have
$$p_{j+k} \geq \frac{1}{10^8 r \sqrt{j\wedge k}} p_j p_k\,.$$
\end{lemma}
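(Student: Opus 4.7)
The plan is to decompose the length-$(j+k)$ random walk at step $j$, using the fact that conditional on the pair $(S_j, S_{j+k})$ the first $j$ and last $k$ coordinates become independent — each being i.i.d.\ exponentials conditioned on its own sum, by \eqref{eq-identity-in-law}. This mirrors the standard Brownian-bridge splitting at a midpoint. By \eqref{eq-identity-in-law} I may take $\theta = \rho$, and without loss of generality assume $j \leq k$ (the bound is symmetric in $(j,k)$).

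Write $S_m = \sum_{i=1}^m Z_i$, $\tilde S_{m'} = S_{j+m'} - S_j$, and $t = S_j - \rho j$. The identity $S_{j+m'}-\rho(j+m') = t + (\tilde S_{m'}-\rho m')$ combined with a triangle inequality shows that $\{M_{j+k}\leq r\}$ is implied by the intersection of $\{\max_{m\leq j}|S_m-\rho m|\leq r\}$ and $\{\max_{m'\leq k}|\tilde S_{m'}-\rho m'|\leq r - |t|\}$. I then lower-bound
\[p_{j+k} \geq \int_{|t|\leq 1} g(\rho j+t)\,\P(\text{first-block event})\,\P(\text{last-block event})\,dt,\]
where $g$ is the conditional density of $S_j$ given $S_{j+k}=\rho(j+k)$. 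By Bayes' formula plus Stirling (as in Claim~\ref{claim-exponential}), this density is at least $c/\sqrt{\rho j}$ on the window $|t|\leq 1$, producing the $1/\sqrt{j\wedge k}$ factor. The first-block event, by \eqref{eq-identity-in-law} applied inside the first block, is exactly of the form defining $p_j$ (with $\rho$ nominally shifted to $\rho + t/j$, a shift that has no effect since the conditional distribution on the simplex depends only on the sum). The last-block event is likewise of the form defining $p_k$, but with threshold $r$ replaced by $r-|t|$.

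The main obstacle is bounding the last-block probability below by $(c/r)\,p_k$ for $|t|\leq 1$. A naive application of Lemma~\ref{lem-deviation} gives only $e^{-Ck/r^3}$, which is weaker than $1/r$ when $k\gg r^3\log r$. To recover $1/r$, I would reuse the block-of-size-$r^2$ decomposition underlying the proof of Lemma~\ref{lem-deviation}: subdivide the length-$k$ walk into $\lfloor k/r^2\rfloor$ such blocks; the events corresponding to all but one block can be kept with threshold $r$, so they contribute the same quantity as in the original $p_k$ bound, while a single designated block absorbs the $r\to r-|t|$ tightening and, by the Kolmogorov-type estimate \eqref{eq-lower-tail} for one bridge of length $r^2$, this costs at most a factor of order $1/r$. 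Combining the density factor $c/\sqrt{\rho j}$, the first-block factor $p_j$, and the last-block factor $(c/r)\,p_k$, and tracking the absolute constants through, then yields $p_{j+k}\geq \frac{1}{10^8 r\sqrt{j\wedge k}}\,p_jp_k$ as claimed.
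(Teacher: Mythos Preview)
Your overall strategy --- condition on $S_j$ near $\rho j$, pick up the $\asymp 1/\sqrt{j\wedge k}$ density from Claim~\ref{claim-exponential}, and split into two conditionally independent blocks --- matches the paper's. The gap is in your handling of the $O(1)$ perturbation. First a small correction: the first-block event $\{\max_{m\leq j}|S_m-\rho m|\leq r\}$ given $S_j=\rho j+t$ is \emph{not} of the form $p_j$ with $\rho$ shifted, because the event measures deviation from the line $m\mapsto\rho m$, not from the interpolation to the actual endpoint $\rho j+t$; so the same $O(|t|)$ discrepancy appears here as in the last block (and $p_j$ does depend on $\rho$ through scaling, contrary to your parenthetical). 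More seriously, your proposed fix --- letting one $r^2$-block ``absorb'' the tightening $r\to r-|t|$ --- does not deliver a bound of the form $(c/r)\,p_k$. The construction via the events $R_i\cap S_i$ in Lemma~\ref{lem-deviation} gives a lower bound on $p_k$ by an explicit product, not a product decomposition of $p_k$ itself; tightening one factor lower-bounds the threshold-$(r-1)$ probability by something of order $e^{-C^\star k/r^2}$, whereas $p_k$ may be as large as $e^{-c^\star k/r^2}$, so the resulting ratio is $e^{-(C^\star-c^\star)k/r^2}$ rather than $c/r$. The constraint $M_k\leq r-|t|$ is global and cannot be localized to a single block in this way.

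The paper sidesteps the threshold comparison entirely. It works on the exact slice $\Omega_0=\{S_j=\rho j\}$, where the two block events are literally $p_j$ and $p_k$ by conditional independence. To pass to the positive-measure event $\{S_j\in[\rho j,\rho j+\rho/2]\}$, it builds for each $\delta\in[0,1/2]$ an explicit measure-preserving map $\Omega_0\cap\Xi\to\Omega_\delta\cap\Xi$ (here $\Xi=\{M_{j+k}\leq r\}$): shift mass $\delta\rho$ from a special coordinate $\tau^*>j$ to coordinate $j$, where $\tau^*=\min\{i>j:z_i\geq\delta\rho\}$. This choice guarantees both $z'_{\tau^*}\geq 0$ and that the partial sums between $j$ and $\tau^*$ sit \emph{below} the line $\rho m$, so adding $\delta\rho$ cannot violate $\Xi$. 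Since $\Xi$ forces $\tau^*-j=O(r)$, pigeonholing over the value of $\tau^*$ costs only a factor $O(r)$, and that is precisely where the $1/r$ in the statement originates.
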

\begin{proof}
Assume that $j\leq k$. The proof follows from a natural idea:
conditioning on the partial sum of the first $j$ variables. Given
that this partial sum is close to the expectation within a window of
size 1 (which occurs with probability $1/\sqrt{j\wedge k}$ as
$j\wedge k$ is the variance for this partial sum), the two segments
are independent and the probability for each of them to have
deviation smaller than $r$ is very close to $p_j$ and $p_k$. Noting
that the probability for the whole sequence to have deviation
smaller than $r$ is larger than the product of the three
aforementioned probabilities, we can then complete the argument. In
what follows, we carry out the technical details.

Denote by $\Omega_\delta = \{(z_i): \sum_{i=1}^{j+k} z_i = \rho
(j+k), \sum_{i=1}^j z_i=\rho j + \delta \rho\}$, and by $\Omega =
\cup_{0\leq \delta\leq 1/2}\Omega_\delta$. By
Claim~\ref{claim-exponential}, we see that
$$\P((Z_i)\in \Omega) \geq \tfrac{1}{2\cdot 10^{6}\sqrt{j}}\,.$$
Just for the technical reason (which will be clear later), we
partition $\Omega_0$ ($\Omega_0$ is defined as the aforementioned $\Omega_\delta$ with $\delta = 0$) into a union of sets $\Omega_{0, \tau}$ such
that $(z_i)\in \Omega_{0, \tau}$ if and only if $\tau = \min\{i\geq
j+1: z_i \geq \delta \rho\}$. Let $\Xi \in \R^{j+k}$ be such that
for all $(z_i)\in \Xi$
\begin{equation}\label{eq-condition-z}|\mbox{$\sum_{i=1}^s$} z_i - \rho s| \leq r \mbox{ for all }
1\leq s\leq j+k\,.\end{equation} Since sequences in $\Xi$ has small
deviation, we see that $\Xi \subseteq \cup_{\tau=1}^{2r} \Omega_{0,
\tau}$. Choose $\tau^*$ such that
$$\P\left((Z_i) \in \Xi \cap \Omega_{0, \tau^*}\right) = \max_{1\leq \tau\leq 2r} \P\left((Z_i) \in \Xi \cap \Omega_{0, \tau}\right) \geq \tfrac{1}{2r} \P((Z_i)\in \Xi\cap \Omega_0)\,.$$
Next, we show that for all $0\leq \delta\leq 1/2$, we have
\begin{equation}\label{eq-perturb}
\P((Z_i) \in \Xi \mid (Z_i)\in \Omega_\delta) \geq \tfrac{1}{10}
\P\left( (Z_i)\in \Xi \cap \Omega_{0, \tau^*}\mid (Z_i)\in
\Omega_0\right)\,.\end{equation} For $(z_i)\in \Omega_{0, \tau^*}
\cap \Xi$, we map $(z_i)$ to $(z'_i)$ by let $(z'_i)\in \Omega_\delta$
be such that $z'_i = z_i$ for $i\neq j, \tau^*$ and $z'_j = z_j +
\delta \rho$ and $z'_{\tau^*} = z_{\tau^*} - \delta \rho$ (the
assumption that $z_{\tau^*} \geq \delta \rho$ guarantees that
$z_{j+1} \geq 0$). Since $0\leq \delta\leq 1/2\leq r$, it is clear
that the sequence $(z'_i)$ also satisfies \eqref{eq-condition-z}.
Also, we see that the determinant of the Jacobian matrix of this
mapping is 1. It remains to compare the densities for $(Z_i)$ at
$(z_i)$ and $(z'_i)$ given $(Z_i)\in \Omega_0$ and $(Z_i)\in
\Omega_\delta$, respectively. It is obvious and straightforward to
check that the ratio of these two densities are within a constant
factor, say, $10$. This yields that
$$p_{j+k} \geq \tfrac{1}{10} \P((Z_i) \in \Omega)\P\left( (Z_i)\in \Xi\cap \Omega_{0, \tau^*} \mid \Omega_0\right) \geq \tfrac{1}{10^7 \cdot 2r \sqrt{j}} p_j p_k\,,$$
 where the last inequality we used conditional independence given
 $\Omega_0$. Altogether, this completes the proof of the lemma.
\end{proof}

\begin{lemma}\label{lem-deviation-conditioning}
Let $Z_i$ be i.i.d.\ exponential variables for $i\in \N$. Consider
$1\leq r\leq \sqrt{n}$ and $1\leq a_1\leq b_1 \leq a_{2} \leq \ldots
\leq a_m\leq b_m \leq n$ such that $q = \sum_{i=1}^m(b_i-a_i +
1)\leq n- 10 r$. Let $1/4\leq \rho\leq 1$ and $M_n$ be defined as in
\eqref{eq-def-M}.  Then for all $z_j$ such that
$$\mbox{$\sum_{j=a_i}^{b_i}$} z_j - \rho (b_i-a_i+1) \leq 2r\,,$$
we have (write $A = \cup_{i=1}^m [a_i, b_i] \cap \N$ and use the
notation of $p_s$ as in \eqref{eq-def-p-s})
$$\P\left(M_n \leq r \mid \mbox{$\sum_{i=1}^n$}Z_i = \rho n, Z_j = z_j \forall j\in A\right) \leq O(r \sqrt{q \wedge {n-q}})p_n10^{100mr}\mathrm{e}^{C^\star q/r^2}\,,$$
where $C^\star$ is the absolute constant from
Lemma~\ref{lem-deviation}.
\end{lemma}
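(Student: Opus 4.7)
The plan is to decompose $[1,n]$ into the alternating $m+1$ free segments $F_1,\ldots,F_{m+1}$ (lying in $A^c$, of lengths $\ell_1,\ldots,\ell_{m+1}$) and $m$ fixed blocks $B_1,\ldots,B_m$ (of lengths $q_1,\ldots,q_m$). Given $(z_j)_{j\in A}$, the block-sum deviations $U_i \df \sum_{j\in B_i} z_j - \rho q_i$ lie in $[-2r,2r]$, and the cumulative deviation $d_i$ at the end of $B_i$ is determined by the $U_j$'s together with the random free-segment sums $W_i = \sum_{j\in F_i}Z_j$. Conditional on $(W_1,\ldots,W_{m+1})$, the internal trajectories of distinct $F_i$'s are independent, and for each free segment with fixed endpoint deviations $d_{i-1},d_i^-\in[-r,r]$, I claim the probability of the internal trajectory staying in $[-r,r]$ is at most $p_{\ell_i}$ — the exponential-bridge analogue of the Brownian fact that the centered bridge $(0\to 0)$ maximizes the symmetric-interval staying probability, which I would derive via a direct density-ratio argument based on Claim~\ref{claim-exponential}.

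Summing over the endpoint deviation tuple $(d_1^-,\ldots,d_m^-)$, where I discretize each $d_i^-$ to an integer grid in $[-r,r]$ (costing a factor $(O(r))^m$) and bound the joint conditional density of the $W_i$'s via Claim~\ref{claim-exponential}, I would obtain an upper bound of the form
\[
h \le \prod_{i=1}^{m+1} p_{\ell_i} \cdot O\bigl(r\sqrt{q\wedge(n-q)}\bigr) \cdot 10^{O(mr)},
\]
in which the distinguished $\sqrt{q\wedge(n-q)}$ factor is the density contribution from the macroscopic "$A$-versus-$A^c$" split, while all the per-boundary grid and conditional-density contributions are absorbed into $10^{O(mr)}$.

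To compare with $p_n$, I iterate Lemma~\ref{lem-p-s} across the $2m$ contiguous splits of $[1,n]$ at positions $a_i-1$ and $b_i$, yielding
\[
p_n \ge \prod_{i=1}^{m+1} p_{\ell_i}\cdot\prod_{i=1}^m p_{q_i}\cdot(10^8 r)^{-2m}\cdot(\text{product of }1/\sqrt{\min}\text{ terms}).
\]
With the appropriate order of iteration (peeling off the shortest pieces first), the $1/\sqrt{\min}$ factors fold into the already-allotted $\sqrt{q\wedge(n-q)}$ plus additional contributions absorbed into $10^{O(mr)}$. Finally, using $\prod_i p_{q_i} \ge \prod_i e^{-C^\star q_i/r^2} = e^{-C^\star q/r^2}$ from Lemma~\ref{lem-deviation}, substitution yields $h \le O(r\sqrt{q\wedge(n-q)})\,p_n\, 10^{100mr}\,e^{C^\star q/r^2}$, as required.

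The main obstacle I anticipate is rigorously establishing the centered-bridge-dominance claim for exponential bridges with off-center endpoints: the reflection principle gives this easily for Brownian bridges, but for exponentials the density asymmetry requires a more careful argument, either via direct comparison of Gamma conditional densities using Claim~\ref{claim-exponential} or by coupling to a centered bridge. A secondary technical concern is the bookkeeping of $\sqrt{\min}$ losses in iterated Lemma~\ref{lem-p-s}: one must choose the splitting order so that only a single $\sqrt{q\wedge(n-q)}$ emerges as the dominant factor, with all other square-root terms safely absorbed into the coarse $10^{100mr}$ bound.
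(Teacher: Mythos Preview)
Your approach is structurally different from the paper's, and as written it has a real gap in exactly the two places you flagged.

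First, the centered-bridge-dominance claim is false as an exact inequality for exponential bridges. Take $\rho=1$, $r=1/2$, $\ell_i=2$, endpoint deviation $c=-1/2$, and window $[-1,0]$ (which is of width $2r$ and contains both endpoints $0$ and $c$, with $d_{\text{start}}=r$). Conditioned on $Z_1+Z_2=3/2$, the midpoint deviation $Z_1-1$ is uniform on $[-1,1/2]$, so the probability of staying in $[-1,0]$ is $2/3$, which exceeds $p_2=1/2$. So you cannot simply bound each off-center segment by $p_{\ell_i}$; at best you get $Cp_{\ell_i}$ for some constant $C$, and even that requires a proof.

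Second, and more seriously, the $\sqrt{\min}$ losses from iterating Lemma~\ref{lem-p-s} are \emph{not} absorbable into $10^{O(mr)}$ on their own: peeling the $m+1$ free pieces out of a segment of length $n-q$ costs a product of order $\prod_k \sqrt{\ell_{(k)}}$, which can be as large as $((n-q)/m)^{m/2}$; in the application $r=\zeta\log n$ with $\zeta$ small and $n-q$ polynomial in $n$, so this dwarfs $10^{100mr}=n^{O(\zeta m)}$. The only way your scheme survives is if the joint density bound on $(W_1,\ldots,W_m)$ from Claim~\ref{claim-exponential} supplies the matching $\prod_k 1/\sqrt{\ell_{(k)}}$ and you cancel the two products using the \emph{same} peeling order. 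Your write-up does not do this: you state the step-1 bound as $h\le \prod p_{\ell_i}\cdot O(r\sqrt{q\wedge(n-q)})\cdot 10^{O(mr)}$ with the density factors already absorbed, and then separately claim the step-2 square roots absorb --- these two statements are individually false.

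The paper avoids both issues with a single device: instead of deleting the $m$ fixed blocks, it \emph{replaces} each by a segment of length $10r$ forced to interpolate linearly between the required endpoint deviations (cost $\ge 10^{-O(r)}$ per block by Claim~\ref{claim-exponential}). This produces one contiguous bridge of length $n'=n-q+10mr$, so the conditional probability is bounded by $10^{O(mr)}p_{n'}$, and a \emph{single} application of Lemma~\ref{lem-p-s} (splitting $n$ into $n'$ and $q-10mr$) together with the lower bound of Lemma~\ref{lem-deviation} relates $p_{n'}$ to $p_n$ at the cost of exactly one $r\sqrt{q\wedge(n-q)}$ and the $e^{C^\star q/r^2}$. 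No per-segment dominance claim is needed, and there is no iterated square-root bookkeeping.
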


\begin{proof}
We first sketch the outline of the proof. Since we are conditioned
on the sum of $Z_i$, by \eqref{eq-identity-in-law} the mean of $Z_i$
is irrelevant. Just for convenience, some times we assume that $Z_i$
has mean $\rho$. We will consider a new sequence of i.i.d.\
exponential variables $(Z'_i)$, whose average will also be
conditioned to be $\rho$. In addition, the size $n'$ of the new
sequence is larger than the number of free variables (the variables
that are not conditioned to be a given value) in the old sequence
$(Z_i)$ by $10 mr$. For each segment $[a_i, b_i]$, we force a
segment of size $10r$ in $(Z'_i)$ such that its partial sum (biased
by $\rho$ for each variable) grows almost linearly with end points
being $0$ and $\sum_{j\in [a_i, b_i]} (z_j - \rho)$ (say the
probability cost is $p$). Given this linear interpolation, the free
variables in $(Z_i)$ and $(Z'_i)$ will have almost the same
distribution and we couple them together. Furthermore, if the free
variables are such that the deviation in $(Z_i)$ is less than $r$
(say this occurs with probability $p'$), so should it be in $(Z'_i)$
by our construction. But we know that the probability for $(Z'_i)$
to have deviation smaller than $r$ is $p_{n'}$. Therefore, we can
deduce the bound $p_{n'}\geq p\cdot p'$. The technical details are
carried out in what follows.

Let $(Z'_i)_{1\leq i\leq n'}$ be i.i.d.\ exponential variables where
$n' = n-q + 10mr$. We first define the mapping $\phi(\cdot)$ between
the coordinates of the original sequence $(Z_i)$ and our new
sequence $(Z'_i)$, by
$$\phi(t) = |i\leq t: i\not\in A| + 10 r|i: b_i\leq t|\,.$$
Note that $\phi(t)$ remains constant over $[a_i, b_i)$. The
intuition behind is that we replace each segment $[a_i, b_i]$ in the
original sequence by a segment of size $10r$. Write $s_i =
\sum_{j=a_i}^{b_i} z_j -\rho(b_i - a_i + 1)$, for $1\leq i\leq m$.
By definition, we have $|s_i| \leq 2r$. For $0\leq \delta_i \leq
1/20$, we define $\Omega_{(\delta_i)} \subseteq \R^{n'}$ such that
$(x_j)_{1\leq j\leq n'} \in \Omega_{(\delta_i)}$ if for all $1\leq
i\leq m$
\begin{align*}\forall 1\leq k< 10r: &-1/20\leq \mbox{$\sum_{j=1}^k$} x_{\phi(b_i) - 10r +j} - k(\rho
+ s_i/10 r)\leq 0 ,\\
& \mbox{$\sum_{j=1}^{10r}$} x_{\phi(b_i) - 10r +j} = 10r\rho + s_i -
\delta_i\,.
\end{align*}
The idea, as we outlined before, is that we use the added segment to
linearly interpolate the (biased) endpoints of the old segment while
we keep the total sum (biased by mean) to be very close. Denote by
$\Omega = \cup_{(\delta_i)\in [0, 1/20]^m}\Omega_{(\delta_i)}$. A
repeated application of Claim~\ref{claim-exponential} gives that
$$\P((Z'_j)_{1\leq j\leq n'} \in \Omega) \geq 10^{-80mr}\,.$$
Next, consider $\Xi\subseteq \R^n$ such that $(x_j)\in \Xi$ if,
$$\forall j\in A: x_j = z_j, \mbox{ and } \forall k\in [n]:|\mbox{$\sum_{j=1}^k$} x_k - \rho k| \leq r, \mbox{ and } \mbox{$\sum_{j=1}^n$} x_j = \rho n\,.$$
Just as a translation of our definition of $\Xi$, we have
\begin{equation}\label{eq-translation}\P\left(M_n \leq r \mid \mbox{$\sum_{j=1}^n$}Z_j = \rho n, Z_j = z_j
\forall j\in A\right)= \P\left((Z_j)_{1\leq j\leq n} \in \Xi \mid
\mbox{$\sum_{i=j}^n$}Z_j = \rho n, Z_i = z_i \forall i\in A
\right)\,.\end{equation} Now for each $(\delta_i)$ and
$(\omega_j)_{1\leq j\leq n'}\in \Omega_{(\delta_i)}$, we construct a
mapping $\psi_{(\delta_i), (\omega_j)}: \Xi \mapsto
\Omega_{(\delta_i)}$ such that it maps $(x_j)_{1\leq j\leq n}\in
\Xi$ to $(y_j)_{1\leq j\leq n'}$
$$\forall j\in A: y_j = \omega_j\,, \forall j\not\in A\cup\{b_1, \ldots, b_m\}: y_{\phi_{j}} = x_j, \mbox{ and } y_{\phi_{b_i}+1} = x_{b_i+1} + \delta_i\,.$$
Crucially, the density of $(Z_j)_{1\leq j\leq n}$  at $(x_j)_{1\leq
j\leq n}$ given that $\{Z_j = z_j \forall j\in A\}$ and
$\sum_{j\in[n]} Z_j = \rho n$, is comparable with the density of
$(Z'_j)_{1\leq j\leq n'}$ at any $(y_j)_{1\leq j\leq n'}$ given that
$(Z'_j)_{1\leq j\leq n'}\in \Omega_{(\delta_i)}$ and $y_j = \omega_j
\forall j\in A$ and $\sum_{j=1}^{n'} Z'_j = \rho n'$. Indeed, the
ratio between these two densities can be directed computed and
founded to be within a factor of $10^m$. In order to see this, note
that given these conditions, the two random vectors have the same
number of free variables and the sum of these free variables differ
by amount of order $m$.

Define $\Xi'\subseteq \R^{n'}$ such that if $(y_j)\in \Xi'$,
$$(y_j) \in \Omega, \forall k\in [n']: |\mbox{$\sum_{j=1}^k$} y_j - k \rho| \leq r, \mbox{ and } \mbox{$\sum_{j=1}^{n'}$} y_j = \rho n'\,.$$
By definition, we can verify that for every $(\omega_j)\in
\Omega_{(\delta_i)}$
$$\psi_{(\delta_i), (\omega_j)}(\Xi) \in \Xi' \mbox{ for all } (\delta_i) \in [0, 1/20]^m\,.$$
We can then finally conclude that
\begin{align*}\P&\left((Z'_j)_{1\leq j\leq n'} \in \Xi' \mid
\mbox{$\sum_{j=1}^{n'}$}Z'_j = \rho n'\right)
\\
&\geq \P\left((Z'_j)_{1\leq j\leq n'} \in \Omega)
\min_{(\delta_i)}\P((Z'_j)_{1\leq j\leq n'} \in \Xi' \mid
\mbox{$\sum_{j=1}^{n'}$}Z'_j = \rho n',
(Z'_j)_{1\leq j\leq n'} \in \Omega_{(\delta_i)}\right) \\
&\geq 10^{-8rm} 10^{-m} \P\left((Z_j)_{1\leq j\leq n}\in \Xi \mid
Z_j = z_j \forall j\not\in A, \mbox{$\sum_{j=1}^n$}Z_n = \rho
n\right)\,.\end{align*} Combined with \eqref{eq-translation}, it
follows that
$$\P\left(M_n \leq r \mid \mbox{$\sum_{i=1}^n$}Z_i = \rho n, Z_i = z_i \forall i\in A\right)\leq 10^{(8r+1)m}p_{n'}\,.$$
Now the desired estimates follows from Lemma~\ref{lem-p-s} and
Lemma~\ref{lem-deviation}.
\end{proof}

\subsection{Lower bound}
Throughout this subsection, fix  $\lambda = \mathrm{e}^{-1} + \beta
(\log n)^{-2}$ for a large absolute constant $\beta>0$ to be
specified. Write $\ell = n^{1/4}$. Let $\zeta>0$ be a small constant
to be specified later. For $\gamma \in \Gamma_\ell$, define
\begin{equation}\label{eq-def-G}
G_\gamma = \{\lambda \ell - 1 \leq X(\gamma)\leq \lambda \ell,
M(\gamma) \leq \zeta \log n\cdot X(\gamma)/\lambda\ell\}\,.
\end{equation}
\noindent {\bf Remark.} Note that given that the event $G(\gamma)$
occurs, we always have $M(\gamma) \leq \zeta \log n + 1$. The extra
seemingly funny factor of $X(\gamma)/\lambda \ell$ is not crucial in
the definition of $G_\gamma$. It is merely for the purpose to have
the following: (which will save us some tedious effort)
\begin{equation}\label{eq-property-funny} \P(M(\gamma) \leq \zeta
\log n\cdot X(\gamma)/\lambda\ell \mid X(\gamma) = z) \equiv
\mathrm{constant} \mbox{ for all } \lambda \ell - 1\leq z\leq
\lambda\ell\,.
\end{equation}
Property \eqref{eq-property-funny} follows from the fact that for
all $z>0$:
$$\{\tfrac{1}{z}(X_e)_{e\in \gamma} \mid X(\gamma) = z\} \stackrel{law}{=} \{(X_e)_{e\in \gamma} \mid X(\gamma) = 1\}\,,$$
which one can verify by definition of exponential variables and
\eqref{eq-gamma-distribution}.

By \eqref{eq-gamma-distribution} and Lemma~\ref{lem-deviation}, we
obtain that
\begin{equation}\label{eq-prob-G} \P(G_\gamma) \geq \tfrac{1}{100}
n^{-\ell} \ell^{-1/2} \exp\big((\mathrm{e}\beta -
C^\star/\zeta^2)\ell(\log n)^{-2}\big)\,,\end{equation} where
$C^\star$ is the absolute constant from Lemma~\ref{lem-deviation}.
Defining $N = \sum_{\gamma\in \Gamma_\ell} \one_{G_\gamma}$, we see
that the first moment of $N$ would be large if we select $c, \delta$
properly.  In particular, we have
\begin{equation}\label{eq-first-moment-super}
\E N = \P(G_\gamma) |\Gamma_\ell| = (1+o(1))\P(G_\gamma)n^{\ell+1}
\geq \tfrac{(1+o(1))n}{100} \ell^{-1/2} \exp\big((\mathrm{e}\beta -
C^\star/\zeta^2)\ell(\log n)^{-2}\big)\,.
\end{equation}

As in Section~\ref{sec:critical-lower}, the key issue is to bound
the second moment of $N$. We use the basic ideas in
Section~\ref{sec:critical-lower}, with more delicate analysis. One
of the main difficulties is that now the probability cost for the
truncation on the deviation is so large such that it has to be
tracked down throughout, requiring delicate estimates on the
deviations of light paths (as incorporated in
Section~\ref{sec:deviation-revisited}) as well as a careful
treatment when patching estimates together.
\begin{lemma}\label{lem-second-moment-super}
For any $\gamma\in \Gamma_\ell$ and $\gamma'\in A_{i, j}$ with
$1\leq i\leq j$, we have that
$$\P(G_{\gamma'} \mid G_{\gamma}) \leq \P(G_\gamma)
O(\sqrt{\ell/(\ell-j)})
 n^j n^{300\zeta i}\mathrm{e}^{(C^\star/\zeta^2 - \mathrm{e}\beta)
j (\log n)^{-2}} \,.$$
\end{lemma}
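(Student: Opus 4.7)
The plan is to adapt the second-moment argument of Lemma~\ref{lem-second-moment}, with the crucial difference that the deviation truncation in $G_\gamma$ now carries a small probability that must be tracked quantitatively via Lemma~\ref{lem-deviation-conditioning}. Write $S = E(\gamma)\cap E(\gamma')$ and $S' = E(\gamma')\setminus S$, so $|S|=j$ and $|S'|=\ell-j$; applying Lemma~\ref{lem-vertex-edge} to both paths shows that $S$ forms exactly $i$ maximal segments inside each of $\gamma$ and $\gamma'$. Since the edges in $S'$ are independent of all edges in $E(\gamma)$, I will condition on $G_\gamma$ and bound the weight and deviation constraints of $G_{\gamma'}$ separately.

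For the weight constraint, on $G_\gamma$ one has $M(\gamma) \leq r := \zeta\log n + O(1)$, and telescoping over the $i$ maximal $\gamma$-segments of $S$ gives $|\sum_{e\in S}X_e - j\lambda| \leq 2i(r+1)$. Hence the weight condition in $G_{\gamma'}$ reduces to $\sum_{e\in S'}X_e \leq \lambda(\ell-j) + O(ir)$, a Gamma tail that by \eqref{eq-gamma-distribution} and Stirling is at most $O((\ell-j)^{-1/2})\, n^{-(\ell-j)}\exp(\mathrm{e}\beta(\ell-j)(\log n)^{-2}) \cdot n^{O(\zeta i)}$, the last factor encoding the boost $\exp(O(ir/\lambda))=n^{O(\zeta i)}$ arising from the threshold shift. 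For the deviation constraint I invoke Lemma~\ref{lem-deviation-conditioning} applied to the $\ell$ edges of $\gamma'$ with $q=j$, $m=i$, and $A$ equal to the set of $\gamma'$-positions of the $S$-edges. The hypothesis that each $A$-segment sum lies within $2r$ of its linear expectation is automatic on $G_{\gamma'}$, being a difference of two $\gamma'$-prefix sums, each of which is controlled by $M(\gamma') \leq r$. The lemma then yields a conditional upper bound $O(r\sqrt{j\wedge(\ell-j)})\, p_\ell \cdot 10^{100ir}\exp(C^\star j/r^2)$, where $p_\ell$ is precisely the quantity that also appears in \eqref{eq-prob-G}, namely $\P(G_\gamma) \geq c\,n^{-\ell}\ell^{-1/2}\exp(\mathrm{e}\beta\ell(\log n)^{-2})\,p_\ell$.

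Multiplying the two bounds, integrating over the $G_\gamma$-compatible values on $E(\gamma)$, and dividing by $\P(G_\gamma)$, the $p_\ell$ factors cancel exactly; the exponentials in $\ell$ and $\ell-j$ combine into $\exp((C^\star/\zeta^2 - \mathrm{e}\beta)j(\log n)^{-2})$ using $C^\star/r^2 = C^\star/(\zeta\log n)^2$; the Gamma-density ratio contributes $n^j$; the $\sqrt{\cdot}$ prefactors combine into $O(\sqrt{\ell/(\ell-j)})$; and $10^{100ir} = n^{100i\zeta\log 10}$ together with the weight slack $n^{O(\zeta i)}$ fits comfortably inside $n^{300\zeta i}$. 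The main obstacle I anticipate is this final bookkeeping, in particular verifying (i) that the $p_\ell$ factors in numerator and denominator are the identical quantity (not merely bounded by the same exponential) so that they cancel without loss, and (ii) that the hypothesis of Lemma~\ref{lem-deviation-conditioning} is genuinely implied by the event $G_{\gamma'}$ itself, so that it does not need to be added as extra conditioning with additional probability cost.
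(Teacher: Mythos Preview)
Your approach is essentially the paper's: factor $\P(G_{\gamma'}\mid G_\gamma)$ into a weight part and a deviation part, bound the first by a Gamma tail on $S'$ using the slack inherited from $M(\gamma)$, bound the second via Lemma~\ref{lem-deviation-conditioning}, and then cancel the exact conditional deviation probability against the corresponding factor in $\P(G_\gamma)$.

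One point is misstated and is exactly your worry (ii). The hypothesis of Lemma~\ref{lem-deviation-conditioning} concerns the \emph{given} values $(z_j)_{j\in A}$, i.e.\ the weights on $S=E(\gamma)\cap E(\gamma')$, and must be verified \emph{before} you bound the conditional deviation of $\gamma'$. You cannot appeal to $M(\gamma')\leq r$, since that is the very event being estimated; that would be circular. The correct verification uses $G_\gamma$: each $S$-component is simultaneously a segment of $\gamma$, so its weight is a difference of two $\gamma$-prefix sums, and $M(\gamma)\leq \zeta\log n$ on $G_\gamma$ gives $\big|\sum_{e\in\text{comp}}X_e - \lambda|\text{comp}|\big|\leq 2\zeta\log n + O(1)$. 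This is precisely what the paper does, and it removes the circularity entirely.

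On your worry (i): the cancellation is exact. The key is property~\eqref{eq-property-funny}: the conditional probability $\P(M(\gamma)\leq \zeta\log n\cdot X(\gamma)/\lambda\ell \mid X(\gamma)=z)$ is constant over $z\in[\lambda\ell-1,\lambda\ell]$, so the quantity $p_\ell$ that Lemma~\ref{lem-deviation-conditioning} outputs is literally the same number that appears as a factor in $\P(G_\gamma)=\P(\lambda\ell-1\leq X(\gamma)\leq\lambda\ell)\cdot p_\ell$. No exponential comparison is needed.
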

\begin{proof}
Write $S = E(\gamma) \cap E(\gamma')$ and $S' = E(\gamma') \setminus
S$. Conditioned on $G_\gamma$ (see the definition of $G_\gamma$ in
\eqref{eq-def-G}), we have that
$$\mbox{$\sum_{e\in S}$} X_e \geq \lambda |S| -1 - 2i\zeta \log n = \lambda j - 1 - 2i\zeta \log n\,.$$
Therefore, we obtain that
$$\P(G_{\gamma'} \mid G_{\gamma}) = \P\left(\lambda \ell - 1\leq X(\gamma')\leq \lambda \ell \mid G_{\gamma}\right) \cdot\P\left(M(\gamma') \leq
 \zeta \log n \cdot X(\gamma')/\lambda\ell \mid G_{\gamma}, \lambda \ell - 1\leq X(\gamma')\leq \lambda \ell\right)\,.$$
It is clear that \begin{align*}\P\left(\lambda \ell - 1\leq
X(\gamma')\leq \lambda \ell \mid G_{\gamma}\right) &\leq
\P\left(\mbox{$\sum_{e\in S'}$} X_e \leq \lambda |S'| + 1 +
  2i\zeta \log n \mid G_\gamma\right)\\
  & = \P\left(\mbox{$\sum_{e\in S'}$} X_e \leq \lambda |S'| + 1 + 2i\zeta \log
  n\right)\,.\end{align*}
Recalling \eqref{eq-gamma-distribution} and that $|S'| = \ell-j$, we
get that
$$\P\left(\lambda \ell - 1\leq X(\gamma')\leq \lambda\ell \mid G_{\gamma}\right) \leq O((\ell-j)^{-1/2}) n^{-(\ell - j)} \mathrm{e}^{\mathrm{e}\beta(\ell-j) (\log n)^{-2}} n^{2\zeta i}\,.$$
By Lemma~\ref{lem-deviation-conditioning} and property
\eqref{eq-property-funny}, we obtain that
\begin{align*}\P&\left(M(\gamma') \leq
 \zeta \log n \cdot X(\gamma')/\lambda\ell \mid G_{\gamma}, \ell \lambda - 1\leq X(\gamma')\right)\\
 & \leq \P\left(M(\gamma) \leq
 \zeta \log n  \cdot X(\gamma)/\lambda \ell \mid \lambda \ell - 1 \leq X(\gamma) \leq \lambda \ell\right) \sqrt{j \wedge (n-j)} n^{300 \zeta i} \mathrm{e}^{C^\star j(\zeta \log
 n)^{-2}}\,.\end{align*}
Note that \begin{align*}\P(G_\gamma) &= \P\left(\lambda \ell - 1
\leq X(\gamma) \leq \lambda \ell\right)\P\left(M(\gamma) \leq \zeta
\log n \cdot X(\gamma)/\lambda\ell \mid
\lambda \ell - 1 \leq X(\gamma) \leq \lambda \ell\right)\,,\\
\P\left(G_{\gamma'} \mid G_{\gamma}\right)& = \P\left(\lambda \ell -
1 \leq X(\gamma') \leq \lambda \ell \mid
G_{\gamma}\right)\P\left(M(\gamma') \leq \zeta \log n \cdot
X(\gamma')/\lambda \ell \mid G_\gamma, \lambda \ell - 1 \leq
X(\gamma') \leq \lambda \ell\right)\,.\end{align*} Combining the
last four displays together, we complete the proof of the lemma.
\end{proof}
We next conclude this subsection with the proof for the lower bound
on $L(n, \lambda)$.
\begin{proof}[Proof of Theorem~\ref{thm-critical}: lower bound]
Set \begin{equation}\label{eq-zeta-beta}\zeta = 1/10^4, \mbox{ and }
\beta =  10^8 \max(C^\star, 1)\,.\end{equation} By
\eqref{eq-first-moment-super}, we see that
$$\E N \geq n^{3/4}\,.$$
Next, we turn to bound the second moment of $N$. For $\gamma\in
\Gamma_\ell$, consider $\Gamma_\ell$ as a union of $A_{i,j}$ over
$0\leq i\leq j\leq \ell$. For all $\gamma' \in A_{0, 0}$, the events
$G_\gamma$ and $G(\gamma')$ are independent. Thus,
\begin{equation}\label{eq-independent-case-super}
\mbox{$\sum_{\gamma'\in A_{0, 0}}$} \P(G_\gamma \cap G_{\gamma'}) =
\mbox{$\sum_{\gamma'\in A_{0, 0}}$} \P(G_\gamma)\cdot\P(G_{\gamma'})
\leq \P(G_\gamma) \E N\,.\end{equation} We next consider $1\leq
i\leq j\leq \ell$. For $\gamma'\in A_{i, j}$,
Lemma~\ref{lem-second-moment-super} and \eqref{eq-zeta-beta} gives
that
$$\P(G_{\gamma'}\mid G_{\gamma}) \leq \P(G_{\gamma}) O(\sqrt{\ell/\ell-j}) n^{j} n^{i/10} \mathrm{e}^{-10 j (\log n)^{-2}}\,.$$
Combined with Lemma~\ref{lem-counting}, it follows that
$$\mbox{$\sum_{\gamma'\in A_{i, j}}$}\P(G_{\gamma'}\mid G_{\gamma}) \leq \P(G_\gamma)n^{\ell+1} n^{-i/8} \mathrm{e}^{-8 j (\log n)^{-2}} = \E N \cdot n^{-i/8} \mathrm{e}^{-8 j (\log n)^{-2}}\,.$$
Summing over $1\leq i\leq j\leq \ell$ and recalling
\eqref{eq-independent-case-super}, we obtain that
$$\E(N\mid G_\gamma) \leq (1+ O(n^{-1/10})) \E N\,,$$
and therefore
$$\E N^2 = \mbox{$\sum_{\gamma\in \Gamma_\ell}$} \P(G_\gamma) \E(N\mid G_\gamma) = (1+ O(n^{-1/10})) (\E N)^2\,.$$
At this point, a simple application of Chebyshev's inequality gives
that $\P(N>0)\to 1$ as $n\to \infty$, completing the proof for the
lower bound.
\end{proof}

\section{Smooth interpolation through near sub-critical regime}

In this section, we demonstrate a smooth interpolation from
sub-criticality to criticality by proving
Theorem~\ref{thm-subcritical}. The proof uses similar ideas as
within the critical window and is also simpler. We write a separate
proof in order to reduce distractions for the presentation of the
core ideas in the critical regime. The proof in this section will be
presented in a concise way. Throughout, we let $\lambda =
\mathrm{e}^{-1} - \epsilon \leq \mathrm{e}^{-1} - (\log n)^{-2}$.

To prove the upper bound, we see that for $\ell = \epsilon^{-1} \log
n$ (by Claim~\ref{claim-path-monotone}),
$$\P(L(n, \lambda) \geq \ell) \leq \P\left(\exists \ell\leq \ell'<2\ell, \gamma\in \Gamma_{\ell'}: X(\gamma) \leq \lambda \ell'\right)\leq \mbox{$\sum_{\ell\leq \ell'<2\ell}$} n^{\ell'+1} n^{-\ell'} \mathrm{e}^{-\mathrm{e}\epsilon \ell'} = o(1)\,.$$

For the lower bound, consider $\ell = \frac{\log n}{10^4(C^\star
\vee 1) \epsilon}$, and for $\gamma\in \Gamma_\ell$ define
$$H_\gamma = \{\lambda\ell-1\leq X(\gamma)\leq \lambda \ell, M(\gamma)\leq \log n/10\}\,.$$
Then, by \eqref{eq-gamma-distribution} and
Lemma~\ref{lem-deviation}, we have that
$$\E N \geq n^{\ell+1} \tfrac{1}{100\sqrt{\ell}} n^{-\ell} \mathrm{e}^{-\mathrm{e}\epsilon \ell - 100C^\star\ell/(\log n)^2}\geq \sqrt{n}\,.$$
It remains to control the second moment of $N$. Analogous to
derivation of \eqref{eq-condition-F}, we obtain that for $\gamma'\in
A_{i, j}(\gamma)$
$$\P(H_{\gamma'} \mid H_\gamma) = O(1)n^{-(\ell-j)} n^{i/5 }\,.$$
Thus, by Lemma~\ref{lem-counting}, we obtain that for all $1\leq
i\leq j$
$$\mbox{$\sum_{\gamma'\in A_{i, j}}$} \P(H_{\gamma'} \mid H_\gamma) = O(\ell^3 n)n^{-4i/5}\,.$$
Summing over $0\leq i\leq j \leq \ell$, we obtain that
$$\E N^2 = (\E N)^2 + \E N \cdot \ell^5 n n^{-4/5} = (1+o(1))(\E N)^2\,.$$
The lower bound follows immediately.

\subsection*{Acknowledgements}

We are grateful to David Aldous for helpful communications, and we thank an anonymous referee for helpful comments correcting numerous typos in an earlier manuscript.

\small

\end{document}